\documentclass[9pt,a4paper]{amsart}

\usepackage{amssymb}
\usepackage{hyperref}
\usepackage{orcidlink}

\theoremstyle{plain}
\newtheorem{theorem}{Theorem}[section]
\newtheorem{lemma}[theorem]{Lemma}
\newtheorem{proposition}[theorem]{Proposition}
\newtheorem{corollary}[theorem]{Corollary}

\theoremstyle{definition}

\theoremstyle{remark}
\newtheorem{remark}[theorem]{Remark}

\newcommand{\Ps}{\mathbb{P}}

\def\bin #1#2 {\left( \matrix { #1 \cr #2 \cr } \right) }

\begin{document}

\title[The genus of a curve on Calabi-Yau threefolds]
{The genus of a curve on Calabi-Yau threefolds}

\author{Vincenzo Di Gennaro }
\address{Universit\`a di Roma \lq\lq Tor Vergata\rq\rq, Dipartimento di Matematica,
Via della Ricerca Scientifica, 00133 Roma, Italy.}
\email{digennar@mat.uniroma2.it}

\author{Antonio Rapagnetta}
\address{Universit\`a di Roma \lq\lq Tor Vergata\rq\rq, Dipartimento di Matematica,
Via della Ricerca Scientifica, 00133 Roma, Italy.}
\email{rapagnet@mat.uniroma2.it}

\author[Pietro Sabatino]{Pietro Sabatino\, \orcidlink{0009-0005-9454-5910}}
\address{Independent Researcher, Rome, Italy.}
\email{pietro.sabatino74@gmail.com}

\abstract
Let $X$ be a smooth projective Calabi-Yau variety of dimension $3$, with Picard number one. Let $n$ be the self-intersection
of the ample generator of the N\'eron-Severi group of $X$.
Let $\mathsf{GV}_{g,d}$ be the Gopakumar-Vafa invariant of $X$ of genus $g$ and degree $d$. It is conjectured that
{\it if $g>\frac{d^{2}}{2n}+\frac{d}{2}+1$,  then $\mathsf{GV}_{g,d}=0$}. In this paper,
we prove the implication when $d\gg n$ and $X\subset \mathbb P^r$ is a complete intersection in  projective space.
The vanishing follows by proving that if $C$ is
a closed subscheme of $X$ of dimension $1$, arithmetic genus $p_a(C)$,
and degree $d\gg n$, then
$p_a(C) \leq \frac{d^2}{2n}+\frac{d}{2}+1$.
Actually, we prove a somewhat more general result.
In particular, we prove that $C$ is an extremal curve if and only if
$C$ is the complete intersection on $X$ with a hyperplane $H$ and a hypersurface $F$, namely $C=X\cap H\cap F$.

\bigskip\noindent {\it{Keywords}}: Arithmetic genus, projective
curve, Hopf's theorem, Vosper's theorem, Picard group, Castelnuovo bound, Calabi-Yau 3-fold,
Gopakumar-Vafa invariant, complete intersection.

\medskip\noindent {\it{MSC2020}}\,: Primary 14N15;
Secondary 14C20, 14C22, 14H81, 14J30, 14J32, 14M05, 14N05,
14N35, 15A63, 11E99.

\endabstract
\maketitle

\section{Introduction}
Let $X$ be a smooth projective Calabi-Yau variety (i.e. $\mathcal O_X(K_X)\cong \mathcal O_X$)
of dimension $3$, with Picard number one. Let $n$ be the self-intersection
of the ample generator of the N\'eron-Severi group of $X$. Let $\mathsf{GV}_{g,d}$ be the Gopakumar-Vafa invariant of $X$ of genus $g$ and degree $d$.
The following conjecture is stated in \cite[p. 2, Conjecture 1.1]{Liu}.

\smallskip
{\bf{Conjecture}} (Castelnuovo bound conjecture).
$$
g> \frac{d^2}{2n}+\frac{d}{2}+1 \quad \implies \quad \mathsf{GV}_{g,d}=0.
$$

\medskip
The conjecture has been proven for the
quintic in $\mathbb P^4$ and the complete intersection of type $(2,4)$ in $\mathbb P^5$ (see
\cite[p. 2, Conjecture 1.1]{Liu} and the references therein).

\smallskip
In this paper,
we prove the implication when $d\gg n$ and $X$ is a complete intersection in  the complex projective space
$\mathbb P^r_{\mathbf C}=\mathbb P^r$.

\smallskip
\begin{theorem}\label{GV}
Let $X\subseteq\mathbb P^r$ be a smooth projective  complete intersection Calabi-Yau threefold,  of degree $n$.
Let $\mathsf{GV}_{g,d}$ be the Gopakumar-Vafa invariant of $X$ of genus $g$ and degree $d$. Assume that $d\gg n$. Then we have:
$$
g> \frac{d^2}{2n}+\frac{d}{2}+1 \quad \implies \quad \mathsf{GV}_{g,d}=0.
$$
\end{theorem}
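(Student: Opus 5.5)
The plan is to reduce the vanishing of $\mathsf{GV}_{g,d}$ to a purely geometric statement, a Castelnuovo-type bound for one-dimensional subschemes of $X$, and then prove that bound.

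\emph{Reduction.} By the definition of Gopakumar-Vafa invariants through stable pairs or Pandharipande-Thomas invariants, as used in \cite{Liu}, $\mathsf{GV}_{g,d}$ is determined by the PT invariants $P_{m,d}$. Using the standard triangularity argument, $\mathsf{GV}_{g,d}=0$ for all $g>G(d)$ as soon as the moduli spaces $P_m(X,d)$ are empty for every $m<1-G(d)$. Here $G(d)=\frac{d^2}{2n}+\frac d2+1$. A stable pair $\mathcal O_X\to F$ has $F$ supported on a pure one-dimensional Cohen-Macaulay curve $C$ with $\chi(F)\ge\chi(\mathcal O_C)=1-p_a(C)$. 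It therefore suffices to prove the following: every one-dimensional closed subscheme $C\subset X$ of degree $d\gg n$ (we may assume $C$ is locally Cohen-Macaulay) satisfies
\[
p_a(C)\le \frac{d^2}{2n}+\frac d2+1 .
\]
If the bookkeeping in the GV/PT correspondence requires it, the same bound should also be checked for degrees $d'\le d$ of subcurves. This is harmless, since the bound is monotone in $d$.

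\emph{Geometric bound.} Cut $C$ with a general hyperplane $H$ to obtain a zero-dimensional scheme $\Gamma=C\cap H$ of degree $d$. It lies on the smooth surface $S=X\cap H$, which is a complete intersection of degree $n$ in $\Ps^{r-1}$ and is canonical, with $K_S=H|_S$. The usual Castelnuovo argument gives
\[
p_a(C)\le \sum_{i\ge1}\big(d-h_\Gamma(i)\big),
\]
where $h_\Gamma$ is the Hilbert function of $\Gamma$. The key step is a lower bound on $h_\Gamma(i)$ for $d\gg n$. We expect that $\Gamma$ imposes as many conditions on forms of degree $i$ as a complete intersection $X\cap H\cap H'\cap F$ would, namely at least $\min\{d,\ n\,i\}$ up to lower-order corrections. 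To obtain this we would use the standard "$\Gamma$ not on a surface of low degree" strategy. Since $\mathrm{Pic}(X)=\mathbb Z H$ by Lefschetz, any surface in $X$ containing $C$ is cut out by a hypersurface. If $C$ lies on a hyperplane section, then $C\subset S$. In that case the Hodge index theorem on $S$ (whose Picard group may be larger) together with adjunction $2p_a-2=C^2+C\cdot H$ gives
\[
p_a(C)\le \frac{d^2}{2n}+\frac d2+1,
\]
with equality iff $C\sim kH|_S$, i.e.\ $C=X\cap H\cap F$. If $C$ does not lie on a hyperplane section, we get a strictly better bound by bounding $h_\Gamma$ from below via the Hilbert function of the general curve section of $X$ (a canonical curve of degree $n$). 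Here we would combine the growth of the Hilbert function with additive-combinatorics results of Hopf/Vosper type, presumably the paper's tool, to show the Hilbert function grows by at least $n$ per step until it reaches $d$.

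\emph{Main obstacle.} We expect the hardest step to be the non-planar case: controlling $h_\Gamma$ when $\Gamma$ does not lie on a surface of small degree, and showing that the resulting sum is strictly below $\frac{d^2}{2n}+\frac d2+1$ once $d\gg n$. The quadratic term $d^2/(2n)$ matches the count $\sum_i (d-ni)^+$, so the real content is the linear term $\frac d2+1$. Getting it requires sharp Hilbert-function estimates, which is where the Hopf/Vosper inequalities would be applied to the sets of degrees of generators. The assumption $d\gg n$ serves to absorb the bounded error terms.
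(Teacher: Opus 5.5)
There is a genuine gap, in both halves of your geometric argument.

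\emph{The case $C\subset S$.} Here you use adjunction and the Hodge index theorem on $S$. But this $S$ is not the general, smooth hyperplane section you started with. It is the hyperplane section forced to contain $C$, and all that $\mathrm{Pic}(X)=\mathbb Z H$ gives you is that it is integral. It can be singular, even non-normal, and $C$ need not be Cartier on it. So neither $2p_a(C)-2=C^2+C\cdot H$ nor $nC^2\le d^2$ is available, and this is exactly the hard part of the theorem. The paper never does intersection theory on $S$. Instead it cuts $S$ with a general hyperplane, obtaining an integral Gorenstein curve $\Sigma$ with $p_a(\Sigma)=n+1$ and $\omega_\Sigma=\mathcal O_\Sigma(2)$. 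It then writes $p_a(C)\le \frac{d^2}{2n}+\frac d2+\rho$ with $\rho=\psi(\epsilon)+3+x+y+z-h$, expressed through the rank-one torsion-free sheaf $\mathcal A=\mathcal I_{\Gamma/\Sigma}(m+1)$. Finally it bounds $x=h^0(\mathcal A)$, $y=h^0(\mathcal A^\vee(1))$ and $z$ using:
\begin{itemize}
\item Clifford's inequality for torsion-free sheaves;
\item the base-point-free pencil trick;
\item Hopf and Vosper applied to the multiplication map $H^0(\mathcal A)\otimes H^0(\mathcal A^\vee(1))\to H^0(\mathcal O_\Sigma(1))$ inside $K(\Sigma)$.
\end{itemize}
The residual case $n=16$, $\epsilon=7$ still needs the separate analysis of Theorem~\ref{2,2,2,2}. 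The method only gives the bound when $\psi(\epsilon)\le 0$, whereas a Hodge index argument on a smooth $S$ would need no restriction on $n$. That shows the singular-surface case is not a formality.

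\emph{The case where $C$ lies on no hyperplane section.} Here you give no argument, only an expectation, and the tools you name are misplaced. In the paper, Hopf and Vosper act on subspaces of the function field of $\Sigma$ in the degenerate case, not on $h_\Gamma$ for nondegenerate $C$. The reduction to $C\subset S$ is not proved in this paper at all; it is imported from \cite[Corollary 3.3]{DRS}. That is where $d\gg n$ is really used (explicitly $d>6n^2M(n)^2$), and without that result or a substitute your Castelnuovo bound is unproved.

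\emph{The GV reduction.} Monotonicity of the bound in $d$ is not enough to handle lower degrees $d'<d$, because the bound is only established for $d'\gg n$. This is why the paper needs $d>nd_0$ to deduce Theorem~\ref{GV} from Theorem~\ref{nunico}.
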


\medskip
By the same argument in \cite[p. 36, Proof of Theorem 6.4]{Liu}, in order to prove Theorem \ref{GV} it suffices to prove
the following:

\smallskip
\begin{theorem}\label{nunico}
Let $X\subseteq\mathbb P^r$ be a smooth projective  complete intersection Calabi-Yau threefold, of degree $n$.
Let $C$ be  a closed subscheme of $X$ of dimension $1$, arithmetic genus $p_a(C)$,
and of degree $d\gg n$. Then we have:
\begin{equation}\label{Cbound}
p_a(C) \leq \frac{d^2}{2n}+\frac{d}{2}+1.
\end{equation}
Moreover, equality holds  if and only if
$C$ is the complete intersection on $X$ with a hyperplane $H$ and a hypersurface $F$, namely $C=X\cap H\cap F$.
\end{theorem}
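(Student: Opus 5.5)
We reduce to locally Cohen--Macaulay curves and then apply a Castelnuovo--Halphen type bound for curves on $X$, where the Picard group of $X$ replaces the missing ambient surface. First, if $C$ has embedded or isolated points, let $C'\subseteq C$ be the largest purely one-dimensional locally Cohen--Macaulay subscheme. Then $\deg C'=d$ and $p_a(C)=p_a(C')-h^0(\mathcal F)\le p_a(C')$, where $\mathcal F$ is the zero-dimensional kernel sheaf. So it suffices to prove \eqref{Cbound} for locally CM curves, and in the equality case to show that $C$ itself has no embedded points. Next, since $X$ is a smooth complete intersection of dimension $3$, Lefschetz gives $\mathrm{Pic}(X)=\mathbb Z\,\mathcal O_X(1)$ with $H^3=n$. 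For a curve $C\subset X$ the relevant invariant is the smallest $s$ such that $C$ lies on a surface $S\in|\mathcal O_X(s)|$, i.e.\ $h^0(\mathcal I_{C,X}(s))\neq 0$.

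\emph{Case A: $C$ lies on a hyperplane section $S=X\cap H$.} Then $S$ is a surface of degree $n$ in $\mathbb P^{r-1}$ with $K_S=\mathcal O_S(1)$. If $S$ is smooth (or at least normal with manageable singularities), Hodge index gives $C^2\le d^2/n$, and adjunction gives
\[
2p_a(C)-2=C^2+K_S\cdot C\le d^2/n+d ,
\]
which is exactly \eqref{Cbound}, with equality iff $C\equiv kH|_S$ numerically. For a general $S$ Noether--Lefschetz type results give $\mathrm{Pic}(S)=\mathbb Z H$, hence $C=S\cap F$. For singular or nonreduced $S$ (e.g.\ the $C$ one meets are not general) I would instead argue by Riemann--Roch on the plane-like section: intersect with a general hyperplane $H'$ to get the points $\Gamma=C\cap H'$ lying on the curve $X\cap H\cap H'$, estimate the Hilbert function $h_\Gamma$, and sum $p_a(C)\le\sum_{i\ge1}(d-h_\Gamma(i))$. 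Using that $\Gamma$ sits on a canonical curve of degree $n$ whose ideal is generated in degrees coming from $X$, one gets $h_\Gamma(i)\ge \min(d, ni)$ for $i\le d/n$, roughly. Summing gives $d^2/2n+d/2+1$ plus correction terms, and equality forces $\Gamma$ to be a complete intersection of that curve with a hypersurface.

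\emph{Case B: $C$ is not contained in any hyperplane.} Here I would show the bound is much smaller, with an additional saving of order $d^2/n$ times a constant, when $d\gg n$. The plan is to consider the general hyperplane section $\Gamma\subset X\cap H'$, a set of $d$ points on a curve $D$ of degree $n$, spanning $\mathbb P^{r-1}$. Combining the Hilbert function estimates with the fact that $\Gamma$ is not on a hyperplane gives $h_\Gamma(i)\ge\min(d,ni+c)$ with $c>0$ growing. Alternatively, use the minimal $s\ge2$ with $C\subset S\in|\mathcal O_X(s)|$ and a Hodge index argument on $S$ (when $C$ lies on such an $S$ with $s$ small), giving $2p_a-2\le d^2/(sn)+sd$, which is smaller for $s\ge2$ and $d\gg n$. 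When $s$ is large ($s^2n\gtrsim d$), Castelnuovo-type theory applies: Gruson--Peskine/Hartshorne-style bounds for curves not on surfaces of low degree give roughly $p_a\lesssim d^2/(2sn)$.

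\emph{Main obstacle.} The crux is the Hilbert function lower bound for $\Gamma$ on the curve section of $X$: one needs $h_\Gamma(i)\ge ni$ (essentially $\mathcal O(i)$-restrictions to a canonical curve), with a characterization of equality. This is where additive combinatorics enters, matching the keywords Hopf/Vosper. The growth of $h_\Gamma$ should be controlled as the sumset of the "degree sets" of $\Gamma$ in the monomial/Picard lattice. Hopf's inequality gives superadditivity of $h_\Gamma$, and Vosper's theorem characterizes the extremal case as arithmetic progressions, which translates into $\Gamma$, and then $C$, being a complete intersection $X\cap H\cap F$. I would first try to prove $\Delta h_\Gamma(i+j)\ge\Delta h_\Gamma(i)+\Delta h_\Gamma(j)-\text{const}$ via a Hopf-type bilinear map $H^0(\mathcal O_\Gamma(i))\otimes H^0(\mathcal O_\Gamma(j))\to H^0(\mathcal O_\Gamma(i+j))$. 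Then equality in the summed bound forces equality in Hopf at every step, and Vosper gives the structure needed to lift the hypersurface $F$ from $\Gamma$ to $C$ (via $d\gg n$ and a standard lifting lemma).
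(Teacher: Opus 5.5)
There is a genuine gap. Your reduction to locally Cohen--Macaulay curves is fine, and so is the Hodge-index computation for a \emph{smooth} hyperplane section $S\supset C$.

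Case B is only a list of strategies. The paper does not prove it internally either: it quotes \cite[Corollary 3.3]{DRS}, and that is precisely what the hypothesis $d\gg n$ is for. Your Hodge-index variant on $S\in|\mathcal O_X(s)|$ needs $S$ smooth, or at least $C$ Cartier on $S$. You cannot arrange this, because $S$ is dictated by $C$.

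The same restriction applies in Case A. There $S$ is an arbitrary integral, possibly non-normal, hyperplane section on which $C$ need not be Cartier, and this case carries the whole content of the theorem. Your fallback estimate is also off. The points $\Gamma$ lie on $\Sigma=S\cap H'$, an integral Gorenstein curve with $p_a(\Sigma)=n+1$ and $\omega_\Sigma=\mathcal O_\Sigma(2)$, so it is not a canonical curve. Hence $h_\Gamma(i)\le h_\Sigma(i)\le ni-n$ for $i\ge 3$, which contradicts your lower bound. What is true is $h_\Gamma(i)=h_\Sigma(i)$ for $i\le m$, where $d-1=mn+\epsilon$. Exact bookkeeping then gives $p_a(C)=\frac{d^2}{2n}+\frac d2+\rho-\sum_i\delta_i$, with $\rho=-\frac{(1+\epsilon)(n+1+\epsilon)}{2n}-p_a(S)+\sum_{i>m}(d-h_\Gamma(i))$, $p_a(S)=h-1$ and $h=h^0(\mathcal O_X(1))$. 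The theorem \emph{is} the inequality $\rho\le 1$, with equality only in the complete-intersection case. Your ``correction terms'' hide exactly this, and Clifford-type estimates alone leave $\rho$ as large as $4$ (e.g.\ for $X_{2,2,2,2}$ with $n\mid d$).

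The missing idea is to encode the tail of $h_\Gamma$ in the rank-one torsion-free sheaf $\mathcal A=\mathcal I_{\Gamma,\Sigma}(m+1)$ on the integral curve $\Sigma$. Clifford for such sheaves kills all terms with $i>m+3$ and yields $\rho=\psi(\epsilon)+3+x+y+z-h$, where $x=h^0(\mathcal A)$, $y=h^0(\mathcal A^\vee(1))$ and $z=h^1(\mathcal A(2))$. Hopf is then applied to the multiplication $H^0(\mathcal A)\otimes H^0(\mathcal A^\vee(1))\to H^0(\mathcal O_\Sigma(1))$ inside the function field $K(\Sigma)$, giving $x+y\le h-1$.

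Your Hopf map on $\Gamma$ cannot play this role, for three reasons:
\begin{itemize}
\item $H^0(\mathcal O_\Gamma)$ is an Artinian ring with zero divisors, not a field, so Hopf's theorem does not apply.
\item No uniform-position superadditivity of $h_\Gamma$ is available for sections of reducible or non-reduced $C$.
\item In any case the sharp constant comes from balancing the tail against $p_a(S)=h-1$ and $h^0(\mathcal O_\Sigma(1))=h-2$, which superadditivity of $h_\Gamma$ alone does not see.
\end{itemize}

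You also invert the role of Vosper. It \emph{excludes} the case $x+y=h-1$ with $x,y\ge2$, since that would make $\Sigma$ a rational normal curve. The extremal case is instead $\mathcal A\cong\mathcal O_\Sigma$, so $x=1$ and $n\mid d$. Then $C=X\cap H\cap F$ follows because $\delta_{m+1}=0$ lets the degree-$(m+1)$ form through $\Gamma$ lift to $C$.

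Finally, strictness outside the complete-intersection case needs further input you do not mention:
\begin{itemize}
\item the base-point-free pencil trick when $x=0$ or $y=0$, using $h\ge n/2$;
\item the sign condition $\psi(\epsilon)\le 0$, which holds for all $\epsilon$ precisely when $n\le 16$;
\item the Grassmannian determinant versus Chow-form argument when $\epsilon=n-1$ and $x=0$;
\item a separate analysis of $X_{2,2,2,2}$ with $\epsilon=7$, which uses that $\Sigma$ is cut out by four quadrics.
\end{itemize}
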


\smallskip
In turn, Theorem \ref{nunico} is a consequence of the following Theorem \ref{unico} and Theorem \ref{2,2,2,2}.
In Remark $(vi)$ below, we make explicit the condition $d\gg n$ in all our claims.

\smallskip
\begin{theorem}\label{unico}
Let $X\subseteq\mathbb P^r$ be a smooth projective Calabi-Yau threefold, of degree $n$.
Assume the following conditions are satisfied:

\smallskip
(1) the Picard group of $X$ is generated by the hyperplane section class;

\smallskip
(2) $h^1(X,\mathcal O_X)=0$;

\smallskip
(3) $h^0(X,\mathcal O_X(1))\geq \frac{n}{2}$.


\smallskip\noindent
Let $C$ be  a closed subscheme of $X$ of dimension $1$, arithmetic genus $p_a(C)$,
and of degree $d\gg n$. Divide $$d-1=mn+\epsilon,\quad  0\leq \epsilon\leq n-1.$$
Set
\begin{equation}\label{psidiepsilon}
\psi(\epsilon)=\frac{1+\epsilon}{2n}(n-1-\epsilon)-2.
\end{equation}
Then we have:
\begin{equation*}
\psi(\epsilon)\leq 0\quad \implies \quad p_a(C) \leq \frac{d^2}{2n}+\frac{d}{2}+1.
\end{equation*}
Moreover, assume that (regardless of the sign of $\psi(\epsilon)$)
\begin{equation*}
p_a(C) =\frac{d^2}{2n}+\frac{d}{2}+1.
\end{equation*}
We have: if $\epsilon=n-1$, then  $C$ is the complete intersection on $X$ with a hyperplane $H$ and a hypersurface $F$ of degree $\frac{d}{n}$,
namely $C=X\cap H\cap F$; if $n$ is square-free, then $\epsilon=n-1$; if $n< 16$, then $\epsilon=n-1$;
if $n=16$, then either $\epsilon=n-1=15$ or $\epsilon=\frac{n-2}{2}=7$.
\end{theorem}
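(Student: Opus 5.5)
We will run the classical Castelnuovo–Harris–Hartshorne argument: reduce to a generic hyperplane section of $C$ and bound its Hilbert function. This is combined with a Lefschetz-type argument on the hyperplane section surface of $X$, and the arithmetic of the resulting quadratic form is handled with additive combinatorics (Hopf, Vosper).

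\textbf{Reduction to the hyperplane section.} We may assume $C$ has no embedded or zero-dimensional components, since removing them only increases $p_a$. Take a general hyperplane $H$ and set $S=X\cap H$; this is a smooth surface with $K_S=\mathcal O_S(1)$ and $H^2_S=n$. Let $\Gamma=C\cap H$, a zero-dimensional scheme of length $d$. The standard estimate is
$$p_a(C)\le \sum_{i\ge1}\bigl(d-h_\Gamma(i)\bigr),$$
where $h_\Gamma$ is the Hilbert function of $\Gamma$. So everything reduces to lower bounds for $h_\Gamma(i)$.

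\textbf{Two regimes for $C$.} Since $d\gg n$, either $C$ is not contained in a surface of $X$ of degree $<\!\!<d$, or it is.

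\emph{(a) $C$ lies on no low-degree surface.} Use the bound $h_\Gamma(i)\ge\min\{d,\ \text{growth as for points on a general curve of degree } \sigma n\}$, a Castelnuovo-type bound through Bezout on $S$ (Harris's argument). Then $p_a(C)$ is strictly smaller than $d^2/2n+\dots$, provided $d\gg n$.

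\emph{(b) $C$ lies on a surface $T\subset X$ of degree $\sigma n$.} By hypothesis (1), $T\in|\mathcal O_X(\sigma)|$. When $\sigma=1$, $T$ is a hyperplane section surface $S'$. Then $C\subset S'$ is a divisor on a surface with $K_{S'}=H$, and adjunction gives
$$2p_a-2=C^2+CH.$$
The Hodge index theorem gives $C^2\le d^2/n$, hence $p_a\le d^2/2n+d/2+1$.

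When $\sigma\ge2$ and $d\gg\sigma n$, the Hilbert function of $\Gamma$ on the curve $T\cap H$ of degree $\sigma n$ is large enough to force a strictly smaller bound. This step is a comparison of Castelnuovo-type sums.

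Hypotheses (2) and (3) enter in the case $S'$ singular, or $C$ not Cartier. We need $\mathrm{Pic}(S')$ controlled: a Noether–Lefschetz/Lefschetz argument for the general member gives $\mathrm{Pic}=\mathbb Z H$, and (2) provides $h^1=0$ for restriction. Hypothesis (3) guarantees enough hyperplanes that the relevant linear system moves.

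\textbf{The arithmetic step (main obstacle).} In the divisor case we must handle $C$ on a possibly special $S'$ whose Néron–Severi lattice is larger. Write $C\equiv mH+D$ with the lattice even (since $K=H$, $C^2+CH$ is even). Maximizing $C^2$ subject to $CH=d$ gives
$$p_a-\Bigl(\frac{d^2}{2n}+\frac d2+1\Bigr)=\frac{(C\cdot H)^2-nC^2}{-2n}+\text{correction},$$
and the integrality defect depends on $d \bmod n$. The worst case of the defect is exactly $\psi(\epsilon)$. The condition $\psi(\epsilon)\le0$ then yields the bound.

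For equality, $C^2=d^2/n$ with $C$ and $H$ proportional forces $n\mid d$ in the Picard sense when $\mathrm{Pic}=\mathbb ZH$. That gives $\epsilon=n-1$ and $C=S'\cap F$, a complete intersection $X\cap H\cap F$; extending $F$ uses (2) and the vanishing of $H^1(\mathcal O_X(k))$.

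The cases $n$ square-free, $n<16$, and $n=16$ come from analyzing when an even lattice can contain a class with $C\cdot H=d$ and $nC^2=d^2$, with $d\not\equiv0$. This requires $n\,|\,d^2$ with the sums of residues structured; the Hopf and Vosper additive theorems control the sumsets of residues mod $n$ and pin down $\epsilon\in\{n-1,(n-2)/2\}$ for $n=16$. I expect this number-theoretic classification to be the hardest part to make precise.
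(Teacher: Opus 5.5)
There is a genuine gap, and it is where the content of the theorem lies. You only assert regimes (a) and $\sigma\ge 2$; the paper imports that reduction as [DRS, Corollary 3.3], which is where $d\gg n$ is used. After the reduction, the surface $S'\supset C$ is not a general hyperplane section. It is forced by $C$, and hypothesis (1) only guarantees that it is \emph{integral}, so it may be singular. Then $C$ need not be Cartier on $S'$, and neither the adjunction formula $2p_a(C)-2=C^2+C\cdot H$ nor the Hodge index theorem is available in the form you use. A Noether--Lefschetz statement about the general member says nothing about this particular $S'$.

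Conversely, when $S'$ is smooth, your Hodge index argument already gives $p_a(C)\le \frac{d^2}{2n}+\frac d2+1$ for \emph{every} $\epsilon$, with equality only if $C$ is numerically proportional to $H$. So no ``integrality defect'' equal to $\psi(\epsilon)$ can come out of a lattice computation. The hypothesis $\psi(\epsilon)\le 0$ and the exceptional value $\epsilon=7$ for $n=16$ are artifacts of the singular case, which your proposal never treats. The same objection applies to your equality argument: lifting $C$ to $F$ via $H^1(\mathcal O_X(k))=0$ presupposes $C\sim kH$ as Cartier divisors on $S'$.

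The paper handles an arbitrary integral $S$ through its general hyperplane section $\Sigma$, an integral Gorenstein curve with $\omega_\Sigma=\mathcal O_\Sigma(2)$.
\begin{itemize}
\item A Hilbert-function genus formula gives $p_a(C)\le\frac{d^2}{2n}+\frac d2+\rho$.
\item Clifford's theorem for rank-one torsion-free sheaves on $\Sigma$ shows that only three terms contribute to $\rho$. Hence $\rho=\psi(\epsilon)+3+x+y+z-h$, where $\mathcal A=\mathcal I_{\Gamma/\Sigma}(m+1)$, $x=h^0(\mathcal A)$, $y=h^0(\mathcal A^\vee(1))$ and $z=h^1(\mathcal A(2))$. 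This is where $\psi(\epsilon)$ actually comes from.
\item The base-point-free pencil trick disposes of $x=0$ or $y=0$; this is where $h\ge n/2$ is used.
\item Hopf's and Vosper's theorems give $x+y\le h-2$, hence $\rho\le\psi(\epsilon)+1$. They are applied to the multiplication map $H^0(\mathcal A)\otimes H^0(\mathcal A^\vee(1))\to H^0(\mathcal O_\Sigma(1))$ of subspaces of the function field $K(\Sigma)$, not to sumsets of residues mod $n$.
\end{itemize}

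Finally, the clauses for $n<16$ are not arithmetic. Take $n=8$ (the case $X_{2,4}$) and $d=8m+4$, so $\epsilon=3$. Then $\frac{d^2}{2n}+\frac d2+1=(2m+1)^2+4m+3$ is an integer, and only the geometric bound $\rho<1$ (via $\psi(3)=-1$) excludes equality. The square-free clause is the only one that follows from integrality alone, since $n\mid d^2$ implies $n\mid d$.
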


\smallskip
\begin{theorem}\label{2,2,2,2}
Let $X\subseteq\mathbb P^7$ be a smooth projective complete intersection  Calabi-Yau threefold, of degree $n=16$.
Let $C$ be  a closed subscheme of $X$ of dimension $1$, arithmetic genus $p_a(C)$, and
of degree $d\gg n$, with $d-1=mn+7$. Then we have:
\begin{equation*}
p_a(C) < \frac{d^2}{2n}+\frac{d}{2}+1.
\end{equation*}
\end{theorem}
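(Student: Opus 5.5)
We argue by contradiction: assume $p_a(C)=\frac{d^2}{2n}+\frac d2+1$ with $d-1=16m+7$, i.e. $d=16m+8$. Then $\frac{d^2}{32}+\frac d2+1=8m^2+8m+2+8m+4+1=8m^2+16m+7$. The plan is to follow the same machinery used for Theorem \ref{unico}. Cut $C$ with a general hyperplane $H$ to get a zero-dimensional scheme $\Gamma=C\cap H$ of degree $d$ on the surface $S=X\cap H$, a canonical surface of degree $16$ in $\mathbb P^6$, complete intersection of four quadrics. The standard estimate
\[
p_a(C)\le \sum_{i\ge1}\bigl(d-h_\Gamma(i)\bigr)
\]
reduces the question to lower bounds for the Hilbert function $h_\Gamma$. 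Extremality in Theorem \ref{unico} (the case $\epsilon=7$ being the only survivor) forces every inequality in that chain to be an equality. In particular, $h_\Gamma$ must take the extremal values coming from the Hopf/Vosper-type argument, and this forces $\Gamma$ to lie on a curve $D\subset S$ of degree $d'$ small compared with $d$. Here $D$ is a hypersurface section of $S$ in low degree, or, by Lefschetz and assumption (1), a divisor on $S$. The case $\epsilon=(n-2)/2$ is where Vosper's theorem allows an arithmetic progression of half length, which suggests $D$ is numerically ``half'' of a hyperplane-type class.

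First I will pin down the numerical invariants of the hypothetical $D$. The equality case should force $\Gamma$ to be contained in a curve on $S$ whose class $L$ satisfies $L\cdot H_S=8$, $L^2$ determined by the genus count, with $\Gamma$ cut by $L$ and a multiple of $H$. I would use Hodge index on $S$: $L^2\cdot 16\le 64$, so $L^2\le 4$. Adjunction with $K_S=H_S$ gives $2p_a(L)-2=L^2+8$. Plugging the genus of $C$ back in, equality should require $L^2=4$, i.e. $L\equiv H_S/2$ numerically.

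Second, I need to rule this out. On $X=(2,2,2,2)\subset\mathbb P^7$, $\mathrm{Pic}(X)=\mathbb Z H$. Moreover $C$ must lie, by the same Castelnuovo-type liaison argument giving the uniqueness statement in Theorem \ref{unico}, on a surface in $X$ of class $kH$ with $k$ small. Lifting the divisor condition from the general $S$ to $X$ (monodromy on general hyperplane sections), the class of $D$ comes from a surface $\Sigma\subset X$ with $\Sigma\cdot H\cdot H=8$. But every surface in $X$ has class $aH$ and degree $16a$, a contradiction. More carefully, I would show the curves $\Gamma_H\subset D_H$ vary into a surface in $X$ containing $C$ of degree $8$, which is impossible since degrees of surfaces on $X$ are multiples of $16$. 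Alternatively, if $D$ is only on $S$ for general $H$, I would use that the family of such half-classes on $S$ is finite and monodromy-invariant, so they glue.

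The main obstacle is the first step: extracting from equality in the Hilbert function bound a genuine curve $D$ on $S$ of degree $8$ containing $\Gamma$, rather than just numerics. I would try a Bezout/Strano-type lemma: if $h_\Gamma(i)$ grows by at most $8$ per step in a long range, then $\Gamma$ minus few points lies on a curve of degree $\le 8$ in the hyperplane. Then I would show that the exceptional points contradict extremality.
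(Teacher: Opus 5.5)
There is a genuine gap, and it affects both of your steps.

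\emph{Your first step has no mechanism, and in the relevant setting its target does not exist.} The chain of inequalities behind Theorem~\ref{unico} lives on a hyperplane section $S_0=X\cap H_0$ containing $C$ (the reduction of Section~2), and on $\Gamma\subset\Sigma=S_0\cap H$, which is a curve. Take $\epsilon=7$, where $\psi(7)=0$. Equality in that chain then only gives $x+y=h-2=6$ with $x,y>0$, where $x=h^0(\Sigma,\mathcal A)$, $y=h^0(\Sigma,\mathcal A^{\vee}(1))$ and $\deg\mathcal A=8$. This leaves five cases $(x,y)$ open and forces nothing else. In that setting $h_\Gamma=h_\Sigma$ up to degree $m$. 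So $\Delta h_\Gamma(i)=16$ for $4\le i\le m$, followed by the values $16-x$, $8+x-y$, $y$. A Davis/Strano-type lemma can therefore only return $\Sigma$, never a curve of degree $8$. In fact no divisor $D$ on $X\cap H$ with $D\cdot H=8$ can contain $\Gamma$: such a $D$ meets the integral degree-$16$ curve $\Sigma=(X\cap H)\cap H_0$ in a scheme of length $8<d$. If instead you skip the reduction, the estimates of Theorem~\ref{unico} are unavailable, since they bound $h_\Gamma$ through rank-one sheaves on the curve $\Sigma$. You would then have to re-prove the whole bound for points on a surface.

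\emph{Your second step cannot work either.} Your half-class numerology is right. Suppose $C\equiv kL$ on a surface with $K=H$, $L\cdot H=8$, $L^2=4$ and $k$ odd. Then $d=8k$, $\epsilon=7$ and $p_a(C)=2k^2+4k+1=\frac{d^2}{32}+\frac d2+1$. But such an $L$ would live on the special surface $S_0$, chosen to contain $C$, and $\operatorname{Pic}(X)=\mathbb Z H$ says nothing about $S_0$. The Grothendieck--Lefschetz isomorphism on Picard groups needs $\dim X\ge 4$, and Noether--Lefschetz only concerns very general sections. Indeed, since $d\not\equiv 0\pmod{16}$, the class of $C$ on $S_0$ is not a multiple of $H$, so $S_0$ necessarily carries classes not coming from $X$. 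Monodromy over general hyperplane sections never reaches $S_0$. (The liaison argument in Theorem~\ref{unico} also does not help: it needs $\mathcal A=\mathcal O_\Sigma$, i.e.\ $\epsilon=n-1$.)

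The missing idea is to use that $\Sigma$ is a complete intersection of four quadrics in $\mathbb P^5$, working directly on the cases $x+y=6$:
\begin{itemize}
\item $(1,5)$ and $(5,1)$ are impossible. A hyperplane of sections of $\mathcal O_\Sigma(1)$ generates a subsheaf of colength at most $1$, while the relevant cokernel has degree $8$.
\item Vosper's theorem, together with the six quadrics through a rational normal quartic, makes $U\otimes V\to W$ surjective. Hence $\mathcal A$ and $\mathcal A^{\vee}(1)$ are invertible and globally generated.
\item $(2,4)$ and $(4,2)$ then yield a degree-$0$ line bundle with two sections, which is impossible.
\item $(3,3)$ with $\mathcal A^{\otimes 2}\cong\mathcal O_\Sigma(1)$ is the linear-equivalence form of your half-class. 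It puts $\Sigma$ on a Veronese surface, hence on six quadrics, a contradiction.
\item In the remaining $(3,3)$ case, the determinantal cubic on $\mathbb G(2,U)\cong\mathbb P^2$ is $\phi_{\mathcal A}(\Sigma)$. Writing $b$ for the degree of $\phi_{\mathcal A}$ onto its image, this forces $8=3b$, which is absurd.
\end{itemize}
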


\medskip
\begin{remark}\label{osservazioni} $(i)$
The complete intersection Calabi-Yau threefolds are: $X_5\subset \mathbb P^4$ with $n=5$,
$X_{2,4}\subset \mathbb P^5$ with $n=8$, $X_{3,3}\subset \mathbb P^5$ with $n=9$,
$X_{2,2,3}\subset \mathbb P^6$ with $n=12$, $X_{2,2,2,2}\subset \mathbb P^7$ with $n=16$.

\smallskip
$(ii)$ One has: {\it $\psi(\epsilon)\leq 0$ for every $0\leq \epsilon\leq n-1$ if and only if $n\leq 16$}.


\smallskip
$(iii)$ As far as we know, the characterization of the extremal case does not appear in the literature.

\smallskip
$(iv)$ For every $n$,
if $\epsilon\in\{0,\,1,\,2,\,3,\,n-5,\,n-4,\,n-3,\,n-2,\,n-1\}$, then $\psi(\epsilon)\leq 0$. Moreover, if $n\leq 25$, then $\psi(4)\leq 0$;
if $n\leq 18$, then $\psi(5)\leq 0$; etc.

\smallskip
$(v)$ If $n$ is square-free and $\frac{d^2}{2n}+\frac{d}{2}+1$ is an integer, then $d$ is a multiple of $n$, i.e. $\epsilon=n-1$.

\smallskip
$(vi)$  In proving Theorem \ref{nunico}, Theorem \ref{unico} and Theorem \ref{2,2,2,2}, we need the numerical assumption
$d\gg n$ in order to apply \cite[Corollary 3.3.]{DRS}, which
allows us to assume that the curve $C$ lies on a hyperplane section
of $X$ (see below). Therefore, it suffices to assume $d>d_0:=6n^2M(n)^2$, where
$$
M(n)=\frac{n^2}{2}\left[\frac{2}{3}(n+1)+4+2\left(\frac{4}{3}n(n+1)\right)^6\right]^2.
$$
Moreover, in order to prove Theorem \ref{GV} as a consequence of Theorem \ref{nunico}, we need to assume  $d>nd_0$ in
Theorem \ref{GV}.
One may use  the same argument in \cite[p. 36, Proof of Theorem 6.4]{Liu}.
We omit the details.

\end{remark}

\medskip
In short, the outline of the proof is as follows.
By a result from our recent work
\cite[Corollary 3.3.]{DRS},
we may assume that $C$ lies on a hyperplane
section $S$ of $X$. Given the hypothesis on the Picard group, the surface $S$ is necessarily integral.
Thus, the problem reduces to estimating
the genus of a curve on an integral surface. This can be done using classical methods:
the genus formula in \cite[Proposition 3.9]{DF},
Clifford's theorem for integral curves \cite[Proposition 1.5]{Fujita}, Castelnuovo's base-point free pencil trick \cite[p. 447]{E},
Hopf's theorem \cite[Proposition 1.3]{LS}, and its limiting case, namely  \cite[Theorem 33]{BSZ},
which we will refer to as \lq\lq Vosper's theorem\rq\rq.


\section{Notations and preliminary results}

In this section we establish the notation to be used subsequently.
We prove some preliminary results, and recall certain results known in the literature.

\smallskip
$(i)$ Let $X\subset \mathbb P^r$ be as in the claim of Theorem \ref{unico}.
 Set
\begin{equation}\label{acca0}
h=h^0(X,\mathcal O_X(1)).
\end{equation}
We may assume $r=h-1$.
Let $S\subset \mathbb P^{r-1}$ be a hyperplane section of $X$. By the hypothesis on the Picard group of $X$, $S$ is an integral surface of degree $n$.
Let $p_a(S)=\chi(\mathcal O_S)-1$ be the arithmetic genus of $S$.
Let $\Sigma\subset \mathbb P^{r-2}$ be a general hyperplane section of $S$.
Then $\Sigma$ is an integral Gorenstein curve of degree $n$, arithmetic genus
$p_a(\Sigma)=n+1$, with dualizing sheaf given by $\omega_{\Sigma}=\mathcal O_{\Sigma}(2)$.

\begin{lemma}\label{acca}
We have:
$$
p_a(S)=h-1,\quad  h^1(S,\mathcal O_S)=0,\quad  h^0(S,\mathcal O_{S}(1))=h-1, \quad h^0(\Sigma,\mathcal O_{\Sigma}(1))=h-2.
$$
\end{lemma}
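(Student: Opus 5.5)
The plan is to use the two structure sequences
$$0\to\mathcal O_X\to\mathcal O_X(1)\to\mathcal O_S(1)\to 0,\qquad 0\to\mathcal O_X(-1)\to\mathcal O_X\to\mathcal O_S\to 0,$$
together with the analogous sequences for $\Sigma\subset S$. The required vanishings come from Kodaira vanishing and Serre duality on the Calabi-Yau threefold $X$, combined with hypothesis (2).

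First I collect the cohomology of $\mathcal O_X(k)$. Since $K_X\cong\mathcal O_X$, Serre duality gives $h^3(\mathcal O_X)=h^0(\mathcal O_X)=1$. Hypothesis (2) and duality give $h^2(\mathcal O_X)=h^1(\mathcal O_X)=0$. For $\mathcal O_X(-1)$, Kodaira vanishing gives $h^i(\mathcal O_X(-1))=0$ for $i<3$. Duality then gives $h^3(\mathcal O_X(-1))=h^0(\mathcal O_X(1))=h$. Similarly $h^i(\mathcal O_X(1))=0$ for $i>0$.

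From the second sequence I read off $h^0(\mathcal O_S)=1$ and $h^1(\mathcal O_S)=0$, the latter because $h^1(\mathcal O_X)=0$ and $h^2(\mathcal O_X(-1))=0$. The map $H^3(\mathcal O_X(-1))\to H^3(\mathcal O_X)$ is surjective, since $H^3(\mathcal O_S)=0$ for a surface. Also $H^2(\mathcal O_X)=0$, so
$$h^2(\mathcal O_S)=h^3(\mathcal O_X(-1))-h^3(\mathcal O_X)=h-1.$$
Hence $\chi(\mathcal O_S)=1-0+(h-1)=h$, and so $p_a(S)=h-1$.

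For $h^0(\mathcal O_S(1))$, the first sequence with $H^1(\mathcal O_X)=0$ gives $h^0(\mathcal O_S(1))=h-1$. Restricting again, $0\to\mathcal O_S\to\mathcal O_S(1)\to\mathcal O_\Sigma(1)\to 0$ together with $h^1(\mathcal O_S)=0$ gives
$$h^0(\mathcal O_\Sigma(1))=(h-1)-1=h-2.$$

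The only delicate point is justifying vanishing on the possibly singular surface $S$. This is handled by working entirely on $X$ through the Koszul-type sequences, so no vanishing on $S$ itself is needed. I also need $S$ to be a Cartier divisor in $\lvert\mathcal O_X(1)\rvert$, which holds since $S$ is a hyperplane section, and $\Sigma$ to be a Cartier divisor on $S$ cut out by a hyperplane not containing $S$, which holds because $S$ is integral.
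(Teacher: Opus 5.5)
Your proof is correct and follows essentially the same route as the paper. Both use the sequences $0\to\mathcal O_X(-1)\to\mathcal O_X\to\mathcal O_S\to 0$, its twist by $\mathcal O_X(1)$, and $0\to\mathcal O_S\to\mathcal O_S(1)\to\mathcal O_\Sigma(1)\to 0$, with Kodaira vanishing, Serre duality and $h^1(\mathcal O_X)=0$ supplying the vanishings; the only cosmetic difference is that you compute $h^0,h^1,h^2(\mathcal O_S)$ individually, whereas the paper obtains $\chi(\mathcal O_S)=\chi(\mathcal O_X)-\chi(\mathcal O_X(-1))$ directly by additivity (and you correctly invoke $h^2(\mathcal O_X(-1))=0$ for $h^1(\mathcal O_S)=0$, where the paper's text cites $h^1$ instead).
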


\begin{proof}
Taking into account that $\mathcal O_X(K_X)\cong \mathcal O_X$ and that $h^1(X,\mathcal O_X)=0$, the Kodaira Vanishing Theorem, and the Serre Duality Theorem, from
the natural exact sequence
$$
0\to \mathcal O_X(-1)\to \mathcal O_X \to \mathcal O_S\to 0
$$
we get
$$
p_a(S)=\chi(\mathcal O_S)-1=\chi(\mathcal O_X)-\chi(\mathcal O_X(-1))-1=h^3(X,\mathcal O_X(-1))-1=h-1.
$$
Moreover, from the long exact sequence induced in cohomology, since $h^1(X,\mathcal O_X)=0$ and
$h^1(X,\mathcal O_X(-1))=0$,  we get
$$
h^1(S,\mathcal O_S)=0.
$$
Similarly, twisting the previous sequence by $\mathcal O_X(1)$,
we get
$h^0(S,\mathcal O_{S}(1))=h-1$.
Since  $h^1(S,\mathcal O_S)=0$ and  $h^0(S,\mathcal O_{S}(1))=h-1$, from
the natural exact sequence
$$
0\to \mathcal O_S\to \mathcal O_S(1) \to \mathcal O_{\Sigma}(1)\to 0
$$
we get $h^0(\Sigma,\mathcal O_{\Sigma}(1))=h-2$.
\end{proof}

\medskip
$(ii)$ Let $C$ be a closed subscheme of $X\subset \mathbb P^r$ of dimension $1$, degree $d$, and arithmetic genus $p_a(C)$, as in Theorem \ref{unico}.
By \cite[Corollary 3.3.]{DRS},  in order to prove the bound (\ref{Cbound}) on $p_a(C)$, we may assume that $C$ lies on a hyperplane section $S$ of $X$,
$C\subset S\subset \mathbb P^{r-1}$.
Let $\Gamma\subset\Sigma\subset\mathbb P^{r-2}$ be a general hyperplane section of $C$. We denote by $h_C$, $h_{\Gamma}$, $h_S$ the
corresponding Hilbert functions. Divide
\begin{equation}\label{divide}
d-1=mn+\epsilon, \quad 0\leq \epsilon\leq n-1.
\end{equation}
Recall that we are assuming that $d\gg n$. In the following lemma (compare with
\cite[Proposition 3.9 and Remark 3.5]{DF}) we denote by $\pi_X$ the linear sectional genus of $X$, i.e.
$\pi_X=p_a(\Sigma)=n+1$.

\begin{lemma}\label{RCMP}
We have:
\begin{equation}\label{f1}
p_a(C)= \binom{m}{2}n+m(\epsilon+\pi_X)-p_a(S) +\sum_{i=m+1}^{+\infty}d-h_{\Gamma}(i)-\sum_{i=m+1}^{+\infty}\delta_i,
\end{equation}
where
$$
\delta_i
=\dim\ker\left[H^1(\mathbb P^{r-1},\mathcal I_{C,\mathbb P^{r-1}}(i-1))\to H^1(\mathbb P^{r-1},\mathcal I_{C,\mathbb P^{r-1}}(i))\right].
$$
In particular
\begin{equation*}
p_a(C)\leq \binom{m}{2}n+m(\epsilon+\pi_X)-p_a(S) +\sum_{i=m+1}^{+\infty}d-h_{\Gamma}(i).
\end{equation*}
\end{lemma}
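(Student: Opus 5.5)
We prove formula (\ref{f1}) by comparing the Hilbert functions of $C$, of its general hyperplane section $\Gamma$, and of $S$. The plan has three steps: express $p_a(C)$ through first differences of $h_C$; bound those differences by $h_\Gamma$ in high degree; and use the structure of $S$ in low degree.

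\textbf{Step 1: write $p_a(C)$ as a sum over degrees.} Fix a general hyperplane $L\subset\mathbb P^{r-1}$, so that $\Gamma=C\cap L$. Consider the exact sequence
$$0\to\mathcal I_{C,\mathbb P^{r-1}}(i-1)\to\mathcal I_{C,\mathbb P^{r-1}}(i)\to\mathcal I_{\Gamma,L}(i)\to 0.$$
Reading off its long cohomology sequence in degrees $0$ and $1$ gives
$$h_C(i)-h_C(i-1)=h_\Gamma(i)-\delta_i,$$
with $\delta_i$ exactly the kernel dimension in the statement. For $i\gg0$ we have $h_C(i)=di+1-p_a(C)$ and $h_C(0)=1$. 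Summing $d-(h_C(i)-h_C(i-1))$ over $i\ge1$ then gives
$$p_a(C)=\sum_{i\ge1}\bigl(d-h_\Gamma(i)+\delta_i\bigr),$$
a finite sum.

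\textbf{Step 2: handle the degrees $i\le m$ using $S$.} Since $C\subset S$, we have $h_C(i)\ge h_S(i)$ only in the wrong direction, so instead we work with $\Gamma\subset\Sigma$. The complete intersection $\Sigma$ has degree $n$ and genus $n+1$. For $i\le m$ the degree $d=mn+\epsilon+1$ exceeds $in$, so we compare $h_\Gamma(i)$ with $h_\Sigma(i)$. The cleaner route is to use the identity from Step 1 for $S$ itself, as in \cite[Proposition 3.9]{DF}. We write $p_a(C)$ as the sum over $i\ge m+1$ plus the sum over $i\le m$. For $i\le m$ we replace $h_C(i)$ by $h_S(i)$, using that $h^0(\mathcal I_{C}(i))=h^0(\mathcal I_S(i))$ for $i\le m$: a form of degree $i\le m$ containing $C$ but not $S$ would cut $S$ in a curve of degree $in<d$. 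Thus $h_C(i)=h_S(i)$ for $i\le m$, with $h_S$ computed on $\mathbb P^{r-1}$.

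\textbf{Step 3: compute $h_S(m)$ and assemble.} This gives
$$p_a(C)=dm+1-h_S(m)+\sum_{i\ge m+1}\bigl(d-h_\Gamma(i)+\delta_i\bigr).$$
The sign of the $\delta_i$ here must match the statement. With $h_C(i)-h_C(i-1)=h_\Gamma(i)-\delta_i$ the sum carries $+\delta_i$; the statement has $-\delta_i$, which arises if one uses instead $h_C(i)-h_C(i-1)\ge h_\Gamma(i)$ up to kernel corrections in $H^1$. I would redo the sequence carefully to fix this, checking which map's kernel is $\delta_i$ so that the final inequality $p_a(C)\le\ldots$ follows from $\delta_i\ge0$.

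Next, $S$ is projectively normal in the relevant range: $S$ is a hyperplane section of $X$ with $h^1(\mathcal O_X(k))=0$, so $h^1(\mathcal I_S(m))=0$. Hence
$$h_S(m)=\chi(\mathcal O_S(m))=\binom{m}{2}n+\frac{m}{2}(n-2\pi_X+2)\cdot(\ldots)+1+p_a(S),$$
obtained from Riemann--Roch on $S$ with $K_S=\mathcal O_S(1)$ and $h^2(\mathcal O_S(m))=h^0(\mathcal O_S(1-m))=0$ for $m\ge2$. Explicitly, $\chi(\mathcal O_S(m))=\frac{n m(m-1)}{2}+1+p_a(S)$ since $K_S=H$. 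Substituting into $dm+1-h_S(m)$ with $d=mn+\epsilon+1$ yields
$$\binom{m}{2}n+m(\epsilon+n+2)-p_a(S)-\ldots,$$
which matches (\ref{f1}) after the bookkeeping with $\pi_X=n+1$.

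The main obstacle is this bookkeeping: the vanishing $h^1(\mathcal I_S(i))=0$ together with the equality $h_C(i)=h_S(i)$ for $i\le m$, and pinning down the sign convention for $\delta_i$.
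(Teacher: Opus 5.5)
Your outline matches the paper's proof. You telescope $p_a(C)=\sum_{i\ge1}(d-\Delta h_C(i))$, split at $m$, and use $h_C(i)=h_S(i)$ for $i\le m$, since a form of degree $i\le m$ not vanishing on the integral surface $S$ cuts a curve of degree $in<d$. You then evaluate $h_S(m)$ by the Hilbert polynomial.

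The gap is in the one identity that carries the content of the lemma. Your Step 1 asserts $\Delta h_C(i)=h_\Gamma(i)-\delta_i$, which is false, and you leave the resulting sign clash unresolved. Write $\mathcal I_C$ for $\mathcal I_{C,\mathbb P^{r-1}}$ and $\mathcal I_\Gamma$ for $\mathcal I_{\Gamma,\mathbb P^{r-2}}$, and consider
\[
0\to H^0(\mathcal I_C(i-1))\to H^0(\mathcal I_C(i))\to H^0(\mathcal I_\Gamma(i))\to H^1(\mathcal I_C(i-1))\to H^1(\mathcal I_C(i)).
\]
The image of $H^0(\mathcal I_\Gamma(i))$ is exactly the kernel of the last map, so $h^0(\mathcal I_\Gamma(i))=h^0(\mathcal I_C(i))-h^0(\mathcal I_C(i-1))+\delta_i$. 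Combined with $h^0(\mathcal O_{\mathbb P^{r-1}}(i))-h^0(\mathcal O_{\mathbb P^{r-1}}(i-1))=h^0(\mathcal O_{\mathbb P^{r-2}}(i))$, this gives $\Delta h_C(i)=h_\Gamma(i)+\delta_i$. This is the classical Castelnuovo inequality $\Delta h_C(i)\ge h_\Gamma(i)$, which you yourself allude to in Step 3.

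Hence $d-\Delta h_C(i)=d-h_\Gamma(i)-\delta_i$. This produces the minus sign in (\ref{f1}), and $\delta_i\ge0$ then gives the final inequality. With your sign the formula would contain $+\sum\delta_i$, and the ``in particular'' bound would not follow. Saying you would redo the computation is not a proof of it. Your aside in Step 2 is also reversed: $\mathcal I_S\subset\mathcal I_C$ gives $h_C\le h_S$. That one is harmless, since you then prove equality for $i\le m$.

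Two further points in Step 3 need repair.

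First, $h^1(X,\mathcal O_X(k))=0$ does not yield $h^1(\mathcal I_{S,\mathbb P^{r-1}}(m))=0$. From $0\to\mathcal I_{X,\mathbb P^r}(m-1)\to\mathcal I_{X,\mathbb P^r}(m)\to\mathcal I_{S,\mathbb P^{r-1}}(m)\to0$ and $H^2(\mathcal I_{X,\mathbb P^r}(m-1))\cong H^1(\mathcal O_X(m-1))=0$, you only get $h^1(\mathcal I_{S,\mathbb P^{r-1}}(m))\le h^1(\mathcal I_{X,\mathbb P^r}(m))$. So you need $m$-normality of $X$, which is not among the hypotheses of Theorem \ref{unico}. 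It holds because $m$ is large compared with $n$; this is exactly where the paper uses $d\gg n$, comparing with \cite[Lemma 3.8]{DF}. It also holds when $X$ is a complete intersection, since then $X$ is projectively normal.

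Second, your final arithmetic is off. Your $h_S(m)=\binom{m}{2}n+1+p_a(S)$ agrees with the paper's $\binom{m+1}{2}n+m(1-\pi_X)+1+p_a(S)$, because $\pi_X=n+1$. With it one gets exactly $md+1-h_S(m)=\binom{m}{2}n+m(n+1+\epsilon)-p_a(S)=\binom{m}{2}n+m(\epsilon+\pi_X)-p_a(S)$. That is, the coefficient is $\epsilon+n+1$, not your $\epsilon+n+2$, and there is no leftover term.
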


\begin{proof}
Since for $t\gg 0$
we have $h_C(t)=1-p_a(C)+dt$, we may write
\begin{equation}\label{sslprop}
p_a(C)=dt+1-h_C(t)=\sum_{i=1}^{t}d-\Delta
h_{C}(i)=\sum_{i=1}^{+\infty}d-\Delta h_{C}(i)
\end{equation}
$$
=\sum_{i=1}^{m}d-\Delta h_{C}(i)+\sum_{i=m+1}^{+\infty}d-\Delta
h_{C}(i).
$$
On the other hand, for degree reasons,  we have $h_C(i)=h_S(i)$ for
every $i\leq m$, and therefore we have
$$
\sum_{i=1}^{m}d-\Delta h_{C}(i)=md+1-h_C(m)=md+1-h_S(m).
$$
Since $d\gg n$, the Hilbert function  $h_S(m)$ coincides with
the Hilbert polynomial $p_S(m)$ of $S$ at level $m$ (compare with \cite[Lemma 3.8]{DF}), i.e.
$$
h_S(m)=p_S(m)={\binom{m+1}{2}}n+m(1-\pi_X)+1+p_a(S).
$$
It follows that
$$
\sum_{i=1}^{m}d-\Delta h_{C}(i)=md+1-h_S(m)
$$
$$
=md+1-\left[{\binom{m+1}{2}}n+m(1-\pi_X)+1+p_a(S)\right]={\binom{m}{2}}n+m(\epsilon+\pi_X)-p_a(S).
$$
Inserting  into (\ref{sslprop}) we obtain:
$$
p_a(C)= \binom{m}{2}n+m(\epsilon+\pi_X)-p_a(S) +\sum_{i=m+1}^{+\infty}d-\Delta h_{C}(i).
$$
Now we observe that
$$
\sum_{i=m+1}^{+\infty} d-\Delta
h_{C}(i)=\sum_{i=m+1}^{+\infty}d-h_{\Gamma}(i)
-\sum_{i=m+1}^{+\infty}\Delta h_C(i)-h_{\Gamma}(i).
$$

\smallskip
It remains to prove that for every $i$
$$\Delta h_C(i)-h_{\Gamma}(i)=\delta_i.$$

\smallskip
From the exact sequence
$$
0\to \mathcal I_{C,\mathbb P^{r-1}}(i-1)\to \mathcal I_{C,\mathbb P^{r-1}}(i) \to \mathcal I_{\Gamma,\mathbb P^{r-2}}(i)\to 0
$$
we get
\begin{equation}\label{snake}
\delta_i=\dim{\text{Im}}\left(H^0(\mathbb P^{r-2},\mathcal I_{\Gamma,\mathbb P^{r-2}}(i)) \to
H^1(\mathbb P^{r-1},\mathcal I_{C,\mathbb P^{r-1}}(i-1))\right)
\end{equation}
$$
=h^0(\mathbb P^{r-2},\mathcal I_{\Gamma,\mathbb P^{r-2}}(i))-
\dim \ker\left(H^0(\mathbb P^{r-2},\mathcal I_{\Gamma,\mathbb P^{r-2}}(i)) \to
H^1(\mathbb P^{r-1},\mathcal I_{C,\mathbb P^{r-1}}(i-1))\right)
$$
$$
=h^0(\mathbb P^{r-2},\mathcal I_{\Gamma,\mathbb P^{r-2}}(i))-h^0(\mathbb P^{r-1},\mathcal I_{C,\mathbb P^{r-1}}(i))+
h^0(\mathbb P^{r-1},\mathcal I_{C,\mathbb P^{r-1}}(i-1)).
$$
On the other hand
$$
\Delta h_C(i)=h^0(\mathbb P^{r-1},\mathcal O_{\mathbb P^{r-1}}(i))-h^0(\mathbb P^{r-1},\mathcal I_{C,\mathbb P^{r-1}}(i))
$$
$$
-h^0(\mathbb P^{r-1},\mathcal O_{\mathbb P^{r-1}}(i-1))+h^0(\mathbb P^{r-1},\mathcal I_{C,\mathbb P^{r-1}}(i-1))
$$
$$
=
h^0(\mathbb P^{r-2},\mathcal O_{\mathbb P^{r-2}}(i))-h^0(\mathbb P^{r-1},\mathcal I_{C,\mathbb P^{r-1}}(i))
+h^0(\mathbb P^{r-1},\mathcal I_{C,\mathbb P^{r-1}}(i-1)).
$$
And
$$
h_{\Gamma}(i)=h^0(\mathbb P^{r-2},\mathcal O_{\mathbb P^{r-2}}(i))-h^0(\mathbb P^{r-2},\mathcal I_{\Gamma,\mathbb P^{r-2}}(i)).
$$
Therefore, we have
$$
\Delta h_C(i)-h_{\Gamma}(i)=h^0(\mathbb P^{r-2},\mathcal O_{\mathbb P^{r-2}}(i))-h^0(\mathbb P^{r-1},\mathcal I_{C,\mathbb P^{r-1}}(i))
+h^0(\mathbb P^{r-1},\mathcal I_{C,\mathbb P^{r-1}}(i-1))
$$
$$
-h^0(\mathbb P^{r-2},\mathcal O_{\mathbb P^{r-2}}(i))+h^0(\mathbb P^{r-2},\mathcal I_{\Gamma,\mathbb P^{r-2}}(i))
$$
$$
=h^0(\mathbb P^{r-2},\mathcal I_{\Gamma,\mathbb P^{r-2}}(i))-h^0(\mathbb P^{r-1},\mathcal I_{C,\mathbb P^{r-1}}(i))+
h^0(\mathbb P^{r-1},\mathcal I_{C,\mathbb P^{r-1}}(i-1)),
$$
which is equal to $\delta_i$ by (\ref{snake}).
\end{proof}

\smallskip
Inserting  $m=(d-1-\epsilon)/n$ in (\ref{f1}), and taking into account that $\pi_X=n+1$, we get:
\begin{corollary}\label{stime}
$$
p_a(C)= \frac{d^2}{2n}+\frac{d}{2}+\rho-\sum_{i=m+1}^{+\infty}\delta_i,
$$
where
\begin{equation}\label{rho}
\rho=-\frac{1+\epsilon}{2n}(n+1+\epsilon)-p_a(S)+\sum_{i=m+1}^{+\infty}d-h_{\Gamma}(i).
\end{equation}
In particular, we have:
$$
p_a(C)\leq \frac{d^2}{2n}+\frac{d}{2}+\rho.
$$
\end{corollary}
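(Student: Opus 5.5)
The corollary is pure algebra once we substitute $m=(d-1-\epsilon)/n$ and $\pi_X=n+1$ into formula (\ref{f1}) of Lemma \ref{RCMP}. We therefore only need to show
$$
\binom{m}{2}n+m(\epsilon+n+1)=\frac{d^2}{2n}+\frac{d}{2}-\frac{1+\epsilon}{2n}(n+1+\epsilon).
$$
The remaining terms $-p_a(S)$, $\sum_{i\ge m+1}(d-h_\Gamma(i))$ and $-\sum_{i\ge m+1}\delta_i$ carry over unchanged. Once they are grouped into $\rho$ as in (\ref{rho}), the identity for $p_a(C)$ follows.

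To check the displayed identity, write $mn=d-1-\epsilon$ and set $a=1+\epsilon$, so $mn=d-a$. Then
$$
\binom{m}{2}n=\frac{mn(m-1)}{2}=\frac{(d-a)^2}{2n}-\frac{d-a}{2}.
$$
The second term is
$$
m(\epsilon+n+1)=m(a+n)=\frac{(d-a)a}{n}+(d-a).
$$
Adding the two gives
$$
\frac{(d-a)^2+2a(d-a)}{2n}+\frac{d-a}{2}=\frac{d^2-a^2}{2n}+\frac{d}{2}-\frac{a}{2},
$$
which equals $\frac{d^2}{2n}+\frac d2-\frac{a(a+n)}{2n}$. Since $a+n=n+1+\epsilon$, this is exactly the required expression.

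Finally, each $\delta_i$ is the dimension of a kernel, hence nonnegative. Dropping $-\sum\delta_i$ therefore yields the inequality $p_a(C)\le \frac{d^2}{2n}+\frac d2+\rho$, which is equivalently the "in particular" clause of Lemma \ref{RCMP} rewritten. There is no real obstacle here; the only care needed is in the bookkeeping of the quadratic in $m$.
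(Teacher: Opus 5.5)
Your proposal is correct and is essentially the paper's own argument: substitute $m=(d-1-\epsilon)/n$ and $\pi_X=n+1$ into the formula of Lemma \ref{RCMP}. Your computation with $a=1+\epsilon$, giving $\binom{m}{2}n+m(a+n)=\frac{d^2}{2n}+\frac{d}{2}-\frac{a(a+n)}{2n}$, checks out, and the inequality follows from $\delta_i\geq 0$.
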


\smallskip
$(iii)$
Our strategy is to prove that $\rho\leq 1$. A first estimate on $\rho$ follows using
Clifford's theorem for  {a quasi-invertible sheaf} on $\Sigma$
(compare with \cite[p. 166, Proposition 1.5]{Fujita}, and \cite[Proof of Lemma 3.3]{DF}).
We keep the previous notation.

\begin{proposition}\label{m+1}
For every $j\geq 1$ we have:
\begin{equation*}
d-h_{\Gamma}(m+j)\leq \max\left\{0, \,
\left\lfloor\frac{2\pi_X-(jn-1-\epsilon)}{2}\right\rfloor\right\}.
\end{equation*}
\end{proposition}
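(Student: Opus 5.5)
The plan is to pass from the Hilbert function of $\Gamma$ to $h^0$ of a quasi-invertible sheaf on $\Sigma$, and then bound its $h^1$ by Clifford's theorem \cite[Proposition 1.5]{Fujita}. Here $\Gamma\subset\Sigma$ is a zero-dimensional subscheme of degree $d$ on the integral Gorenstein curve $\Sigma$, with $\omega_\Sigma=\mathcal O_\Sigma(2)$ and $p_a(\Sigma)=\pi_X=n+1$.

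\emph{Step 1 (reduce to $\Sigma$).} Fix $i=m+j$. Since $\mathcal I_{\Sigma,\mathbb P^{r-2}}(i)\subseteq \mathcal I_{\Gamma,\mathbb P^{r-2}}(i)$, the Hilbert function satisfies $h_\Gamma(i)\geq h_\Sigma(i)-h^0(\Sigma,\mathcal I_{\Gamma,\Sigma}(i))$, where $h_\Sigma(i)$ is the rank of the restriction $H^0(\mathcal O_{\mathbb P^{r-2}}(i))\to H^0(\mathcal O_\Sigma(i))$. Because $d\gg n$, the integer $i$ is large, so this restriction is surjective (as in \cite[Lemma 3.8]{DF}). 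Hence $h_\Sigma(i)=h^0(\mathcal O_\Sigma(i))=in+1-\pi_X$, using $h^1(\mathcal O_\Sigma(i))=h^0(\mathcal O_\Sigma(2-i))=0$.

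\emph{Step 2 (Riemann--Roch).} Set $\mathcal L=\mathcal I_{\Gamma,\Sigma}(i)$. This is a torsion-free rank-one, quasi-invertible sheaf on $\Sigma$ of degree $in-d=jn-1-\epsilon$, since $d=mn+1+\epsilon$. Riemann--Roch gives $h^0(\mathcal L)=jn-1-\epsilon+1-\pi_X+h^1(\mathcal L)$. Substituting into Step 1,
$$d-h_\Gamma(i)\leq d-in-1+\pi_X+h^0(\mathcal L)=h^1(\mathcal L).$$

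\emph{Step 3 (Clifford).} It remains to show that $h^1(\mathcal L)\leq\max\{0,\lfloor(2\pi_X-\deg\mathcal L)/2\rfloor\}$. By duality, $h^1(\mathcal L)=h^0(\mathcal M)$ with $\mathcal M=\mathcal{H}om(\mathcal L,\omega_\Sigma)$, which has degree $2\pi_X-2-\deg\mathcal L$.
\begin{itemize}
\item If $\deg\mathcal M<0$, then $h^0(\mathcal M)=0$, since $\Sigma$ is integral.
\item If $0\leq\deg\mathcal M\leq 2\pi_X-2$ and $h^1(\mathcal M)=h^0(\mathcal L)>0$, Fujita's Clifford inequality gives $h^0(\mathcal M)\leq \deg\mathcal M/2+1=(2\pi_X-\deg\mathcal L)/2$. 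Since $h^0$ is an integer, we may take the floor.
\item If $h^0(\mathcal L)=0$, then $h^1(\mathcal L)=\pi_X-1-\deg\mathcal L$. Here I would argue that $h^0(\mathcal L)>0$ automatically for $j\geq1$, because $\mathcal I_\Gamma(m+j)$ has sections for large $m$ by counting. More simply, $h^0(\mathcal M)\leq\deg\mathcal M+1$ whenever $\deg\mathcal M\geq0$, and this is at most $(2\pi_X-\deg\mathcal L)/2$ when $\deg\mathcal L\geq 2\pi_X-2$.
\end{itemize}

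The main obstacle is this last range: $h^0(\mathcal L)=0$ with $0<\deg\mathcal L<2\pi_X-2$, where Clifford does not directly apply. I would handle it using the count $h^0(\mathcal O_\Sigma(i))\gg d$ from $i\geq m+1$ to force $h^0(\mathcal L)\neq0$. Indeed, $h^0(\mathcal L)\geq \deg\mathcal L+1-\pi_X$, and when this is nonpositive I would compare directly with $h^0(\mathcal L(-1))$, following the argument of \cite[Proof of Lemma 3.3]{DF}. The case $\deg\mathcal M\geq0$ but $\mathcal L$ without sections corresponds to $\Gamma$ failing to impose independent conditions only in a way already bounded by $\pi_X-\deg\mathcal L-1\leq(2\pi_X-\deg\mathcal L)/2$ when $\deg\mathcal L\geq -2$. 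This holds since $\deg\mathcal L=jn-1-\epsilon\geq 0$.
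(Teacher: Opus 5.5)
Your argument is the paper's. You identify $d-h_\Gamma(m+j)$ with $h^1(\Sigma,\mathcal I_{\Gamma,\Sigma}(m+j))$ using surjectivity of restriction and $h^1(\mathcal O_\Sigma(m+j))=0$, and then bound it by Fujita's Clifford inequality. For the surjectivity, the paper invokes the Gruson--Lazarsfeld--Peskine bound; this is what makes ``$i$ large'' uniform as $\Sigma$ varies with $C$. Your duality argument for $h^1(\mathcal O_\Sigma(i))=0$ is a valid, simpler substitute for the paper's appeal to Castelnuovo's bound. Applying Clifford to $\mathcal M=\mathcal{H}om(\mathcal L,\omega_\Sigma)$ rather than to $\mathcal L$ is equivalent, since the inequality is self-dual.

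One correction: your suggestion that counting forces $h^0(\mathcal L)>0$ for $j\geq 1$ is false. Indeed $h^0(\mathcal O_\Sigma(m+1))=mn=d-1-\epsilon<d$, and the paper later has to treat $h^0(\Sigma,\mathcal A)=0$ as a separate case. You do not need that claim. The inequality you state at the end, $\pi_X-1-\deg\mathcal L\le(2\pi_X-\deg\mathcal L)/2$ for $\deg\mathcal L\ge 0$, settles $h^0(\mathcal L)=0$ outright. This is exactly how the paper disposes of that case: Clifford's $h^0-1\le\deg/2$ holds trivially when $h^0=0$ and $\deg\ge 0$. So this range is no obstacle, and you should drop the counting and $h^0(\mathcal L(-1))$ digressions.
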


\begin{proof}
Let $\mathcal I\subset \mathcal O_{\Sigma}$
be the ideal sheaf of $\Gamma$ in $\Sigma$.
For every $j\geq 1$ consider  the natural exact
sequence:
\begin{equation}\label{previouses}
0\to\mathcal I(m+j)\to
\mathcal{O}_{\Sigma}(m+j)\to\mathcal{O}_{\Gamma}\to 0.
\end{equation}
Since $\Sigma$ is integral, the sheaf $\mathcal I(m+j)$ is a torsion-free coherent sheaf of rank one
(i.e. {a quasi-invertible sheaf}).
Following \cite[p. 166]{Fujita}, define:

\smallskip
$\bullet$
$\deg \mathcal I(m+j):=\chi(\mathcal I(m+j))-1+\pi_X$,

\smallskip
$\bullet$
$\Delta \mathcal I(m+j):=1+\deg \mathcal I(m+j)-h^0(\mathcal I(m+j))=\pi_X-h^1(\mathcal I(m+j)) $.

\smallskip
Observe that for every $j$ one has  (compare with (\ref{divide})):
\begin{equation}\label{geqp}
\deg \mathcal I(m+j)=\chi(\mathcal I(m+j))-1+\pi_X=\chi(\mathcal O_{\Sigma}(m+j))-d-1+\pi_X=jn-1-\epsilon.
\end{equation}
In particular, we have:
\begin{equation}\label{geq}
j\geq 1\quad\implies\quad \deg \mathcal I(m+j)=jn-1-\epsilon\geq 0.
\end{equation}
Since $d\gg n$, by \cite[Theorem, $(i)$, p. 492]{GLP},
we know that the natural map
$$
H^0(\Ps^{r-2},
\mathcal{O}_{\Ps^{r-2}}(m+j))\to H^0(\Sigma,
\mathcal{O}_{\Sigma}(m+j))
$$
is surjective for every $j\geq 1$. Moreover, since $d\gg n$, in view of the Castelnuovo bound
for $\pi_X$ \cite[Theorem (3.7), p. 87]{EH}, we also have:
$$
h^1(\Sigma,\mathcal
O_{\Sigma}(m+j))=0
$$
for every $j\geq 1$.
Hence,
from the previous exact sequence (\ref{previouses}), we get:
$$
h_{\Gamma}(m+j)=h^0(\Sigma, \mathcal{O}_{\Sigma}(m+j))-h^0(\Sigma,
\mathcal I(m+j))
$$
$$
= 1-\pi_X+(m+j)n-h^0(\Sigma, \mathcal I(m+j)),
$$
i.e.
\begin{equation}\label{ex}
d-h_{\Gamma}(m+j)=h^0(\Sigma,
\mathcal I(m+j))-jn+\pi_X+\epsilon.
\end{equation}
Observe that, combining (\ref{geq}) and (\ref{ex}), and taking into account the definition of the degree given above,
i.e. $\deg \mathcal I(m+j):=\chi(\mathcal I(m+j))-1+\pi_X$,
we get
\begin{equation}\label{exx}
d-h_{\Gamma}(m+j)=h^1(\Sigma,
\mathcal I(m+j)).
\end{equation}

\smallskip $\bullet$
If $h^1(\Sigma, \mathcal I(m+j))=0$ then $h_{\Gamma}(m+j)=d$, and we are done.

\smallskip $\bullet$
Otherwise, $h^1(\Sigma, \mathcal I(m+j))>0$.

\smallskip
In this case we claim that Clifford's
inequality holds true, i.e.
\begin{equation}\label{claim}
h^0(\Sigma, \mathcal I(m+j))-1\leq
\frac{\deg \mathcal I(m+j)}{2}.
\end{equation}
We will prove (\ref{claim}) in a moment. Combining (\ref{claim}) with (\ref{ex}) and (\ref{geq}) we get
$$
d-h_{\Gamma}(m+j)=h^0(\Sigma,
\mathcal I(m+j))-jn+\pi_X+\epsilon
$$
$$
\leq 1+\frac{\deg \mathcal I(m+j)}{2}-jn+\pi_X+\epsilon=
\frac{2\pi-(jn-1-\epsilon)}{2}.
$$
Provided we prove (\ref{claim}),  this concludes the proof of
Proposition \ref{m+1}.

\smallskip
It remains to prove (\ref{claim}).
To this purpose, we argue by contradiction.
Suppose not. We have
\begin{equation}\label{contr}
h^0(\Sigma, \mathcal I(m+j))-1>
\frac{\deg \mathcal I(m+j)}{2}.
\end{equation}
From which we get
\begin{equation}\label{cuno}
\deg \mathcal I(m+j)\geq 2\Delta \mathcal I(m+j)+1.
\end{equation}
In fact, we have:
$$
h^0(\Sigma, \mathcal I(m+j))-1>
\frac{\deg \mathcal I(m+j)}{2}\implies \deg \mathcal I(m+j)< 2h^0(\Sigma, \mathcal I(m+j))-2
$$
$$
\implies \deg \mathcal I(m+j)\leq 2h^0(\Sigma, \mathcal I(m+j))-3
$$
$$
\implies \deg \mathcal I(m+j)\geq 2(1+\deg\mathcal I(m+j)-h^0(\Sigma, \mathcal I(m+j)))+1
$$
$$
\implies  \deg \mathcal I(m+j)\geq 2\Delta \mathcal I(m+j)+1.
$$
Moreover, from (\ref{contr}), we also have
\begin{equation}\label{cdue}
2\Delta \mathcal I(m+j)+1\geq -1.
\end{equation}
In fact, from (\ref{contr}) it follows that $h^0(\Sigma, \mathcal I(m+j))>0$ (recall that from (\ref{geq}) we have
$\deg \mathcal I(m+j)\geq 0$). A non-zero section of $H^0(\Sigma, \mathcal I(m+j))$ induces an exact sequence
$$
0\to \mathcal O_{\Sigma}\to \mathcal I(m+j)\to \mathcal T\to 0,
$$
where $\mathcal T$ is a sheaf supported on a finite set, because the section
$0\to \mathcal O_{\Sigma}\to \mathcal I(m+j)$ is an isomorphism at the general point of $\Sigma$.
Taking cohomology we get the exact sequence
$$
H^1(\Sigma,\mathcal O_{\Sigma})\to H^1(\Sigma, \mathcal I(m+j))\to  H^1(\Sigma,\mathcal T)=0.
$$
Therefore,
$$
h^1(\Sigma, \mathcal I(m+j))\leq h^1(\Sigma,\mathcal O_{\Sigma})=\pi_X.
$$
It follows that
$$
\Delta \mathcal I(m+j)=\pi_X-h^1(\Sigma, \mathcal I(m+j))\geq 0,
$$
which implies (\ref{cdue}).

\smallskip
Summing up, in the case $h^1(\Sigma, \mathcal I(m+j))>0$, from (\ref{contr}) we deduce
(\ref{cuno}) and (\ref{cdue}). By \cite[p. 166, Proposition 1.5]{Fujita} the conditions  (\ref{cuno}) and (\ref{cdue})
imply $\Delta \mathcal I(m+j)=\pi_X$. Hence $h^1(\Sigma, \mathcal I(m+j))=0$, a contradiction.

\smallskip
This concludes the proof of Proposition \ref{m+1}.
\end{proof}

Since $\pi_X=n+1$, by Proposition \ref{m+1} we deduce that the sum  appearing
in (\ref{rho})  consists only of the first three terms, i.e.
\begin{corollary}\label{3terms} We have:
$$
\sum_{i=m+1}^{+\infty}d-h_{\Gamma}(i)=(d-h_{\Gamma}(m+1))+(d-h_{\Gamma}(m+2))+(d-h_{\Gamma}(m+3)).
$$
Moreover, $d-h_{\Gamma}(m+3)=0$ for $\epsilon < n-1$, and $d-h_{\Gamma}(m+3)\leq 1$ when $\epsilon=n-1$.
\end{corollary}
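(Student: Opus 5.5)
This follows directly from Proposition \ref{m+1}, evaluated with $\pi_X=n+1$ for each $j\geq 1$. The bound reads
$$
d-h_{\Gamma}(m+j)\leq \max\left\{0,\,\left\lfloor\frac{2n+2-(jn-1-\epsilon)}{2}\right\rfloor\right\}
=\max\left\{0,\,\left\lfloor\frac{(2-j)n+3+\epsilon}{2}\right\rfloor\right\},
$$
and we only need to check when the numerator is nonpositive or small. Since $0\leq \epsilon\leq n-1$, the numerator $(2-j)n+3+\epsilon$ is at most $(3-j)n+2$.

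\emph{The case $j\geq 4$.} The numerator is at most $-n+2$. For $n\geq 2$ this is $\leq 0$, so $d-h_{\Gamma}(m+j)\leq 0$. By the definition of the Hilbert function of a zero-dimensional scheme of degree $d$, also $d-h_{\Gamma}(m+j)\geq 0$, so these terms vanish.

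Here $n\geq 2$ is automatic. Condition (3) gives $h\geq n/2$, and Lemma \ref{acca} gives $h^0(\Sigma,\mathcal O_\Sigma(1))=h-2\geq 1$, so $\Sigma$ spans a projective space. In any case $n=1$ would force $X$ to be a linear space, which is not Calabi--Yau. In all the examples of interest $n\geq 5$.

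Hence the infinite sum in (\ref{rho}) reduces to the terms $j=1,2,3$, which is the first claim.

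\emph{The case $j=3$.} The numerator is $3+\epsilon-n$.
\begin{itemize}
\item If $\epsilon\leq n-3$, it is $\leq 0$, so the term is $0$.
\item If $\epsilon=n-2$, it equals $1$, and $\lfloor 1/2\rfloor=0$, so the term is $0$.
\item If $\epsilon=n-1$, it equals $2$, giving $\lfloor 2/2\rfloor=1$, so $d-h_{\Gamma}(m+3)\leq 1$.
\end{itemize}
This gives exactly the second claim: vanishing for $\epsilon<n-1$ and at most $1$ when $\epsilon=n-1$.

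There is no real obstacle, since this is routine arithmetic from Proposition \ref{m+1}. The only points needing care are the floor in the borderline case $\epsilon=n-2$, which still yields $0$, and the nonnegativity $h_\Gamma(i)\leq d$, which turns "$\leq 0$" into equality with $0$.
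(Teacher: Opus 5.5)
Your proposal is correct and is exactly the paper's argument: substitute $\pi_X=n+1$ into Proposition~\ref{m+1}, examine the numerator $(2-j)n+3+\epsilon$, and use $h_{\Gamma}(i)\le d$ to turn the upper bound $0$ into an equality. One small remark: your aside deriving $n\ge 2$ from condition (3) does not follow, although the linear-space argument does; more to the point, the aside is unnecessary, since for $j\ge 4$ the numerator is at most $(3-j)n+2\le 1$ for every $n\ge 1$, so its half already has floor $\le 0$.
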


\smallskip
Now, set
$$
\mathcal A=\mathcal I(m+1).
$$
Recall that $\mathcal I$ is the ideal sheaf of $\Gamma$ in $\Sigma$. Therefore, $\mathcal A$ is a
torsion-free coherent sheaf of rank one. Observe that if $C$ is a complete intersection on $S$, then
$\mathcal A=\mathcal O_{\Sigma}$.
From (\ref{geq}), (\ref{ex}), and (\ref{exx}) we may write:

\begin{equation}\label{formulae}
\begin{cases}
d-h_{\Gamma}(m+1)=h^0(\Sigma, \mathcal A)+1+\epsilon=h^1(\Sigma, \mathcal A);\\
d-h_{\Gamma}(m+2)=h^0(\Sigma, \mathcal A(1))+1+\epsilon-n=h^1(\Sigma, \mathcal A(1));\\
d-h_{\Gamma}(m+3)=h^0(\Sigma, \mathcal A(2))+1+\epsilon-2n=h^1(\Sigma, \mathcal A(2)).
\end{cases}
\end{equation}
Moreover,
\begin{equation}\label{degree}
\deg\mathcal A=n-1-\epsilon,\quad\quad \deg\mathcal A(1)=2n-1-\epsilon,\quad \quad \deg\mathcal A(2)=3n-1-\epsilon.
\end{equation}

Set:
\begin{equation}\label{setx}
x=h^0(\Sigma, \mathcal A),\quad y=h^1(\Sigma, \mathcal A(1)),\quad z=h^1(\Sigma, \mathcal A(2)).
\end{equation}
From (\ref{psidiepsilon}), (\ref{acca0}), Lemma \ref{acca}, (\ref{rho}), and (\ref{formulae}), we may write:
\begin{equation}\label{nrho}
\rho=\psi(\epsilon)+3+x+y+z-h.
\end{equation}

\begin{lemma}\label{rhoic}
If $\mathcal A=\mathcal O_{\Sigma}$, then $\epsilon=n-1$ and $\rho=1$.
\end{lemma}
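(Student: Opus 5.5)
If $\mathcal A=\mathcal O_{\Sigma}$, then (\ref{degree}) gives $\deg\mathcal A=n-1-\epsilon$. On the other hand, by the definition of degree, $\deg\mathcal O_{\Sigma}=\chi(\mathcal O_\Sigma)-1+\pi_X=0$. Hence $n-1-\epsilon=0$, that is, $\epsilon=n-1$.

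The plan is to compute the three quantities $x,y,z$ of (\ref{setx}) for $\mathcal A=\mathcal O_\Sigma$ and substitute them into (\ref{nrho}).

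First, $\Sigma$ is integral, so $x=h^0(\Sigma,\mathcal O_\Sigma)=1$.

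Next, $y=h^1(\Sigma,\mathcal O_\Sigma(1))$. Since $\omega_\Sigma=\mathcal O_\Sigma(2)$, Serre duality on the Gorenstein curve $\Sigma$ gives $y=h^0(\Sigma,\mathcal O_\Sigma(1))$. By Lemma \ref{acca} this equals $h-2$. As a consistency check, (\ref{formulae}) gives $y=h^0(\mathcal O_\Sigma(1))+1+\epsilon-n=h^0(\mathcal O_\Sigma(1))$, which agrees.

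Then, $z=h^1(\Sigma,\mathcal O_\Sigma(2))=h^0(\Sigma,\mathcal O_\Sigma)=1$, again by duality.

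Finally, $\psi(n-1)=\frac{n}{2n}\cdot 0-2=-2$. Substituting into (\ref{nrho}) gives
\[
\rho=-2+3+1+(h-2)+1-h=1.
\]

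The only point needing care is that Serre duality with $\omega_\Sigma=\mathcal O_\Sigma(2)$ applies to the line bundles $\mathcal O_\Sigma(k)$ on the Gorenstein curve $\Sigma$. This is standard, so I expect no real obstacle. The rest of the argument is bookkeeping with (\ref{formulae}), (\ref{nrho}) and Lemma \ref{acca}.
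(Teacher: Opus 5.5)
Your proof is correct and follows the paper's route: you compute $x=1$, $y=h-2$ and $z=1$ using duality with $\omega_\Sigma=\mathcal O_\Sigma(2)$ and Lemma \ref{acca}, note that $\psi(n-1)=-2$, and substitute into (\ref{nrho}). The only minor difference is how you get $\epsilon=n-1$: you read it off from $\deg\mathcal A=n-1-\epsilon$ and $\deg\mathcal O_\Sigma=0$, whereas the paper deduces it from $z=d-h_{\Gamma}(m+3)=1$ together with Corollary \ref{3terms}; both deductions are valid.
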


\begin{proof}
Since $\omega_{\Sigma}=\mathcal O_{\Sigma}(2)$, if  $\mathcal A=\mathcal O_{\Sigma}$, by duality we have
$$z=h^1(\Sigma, \mathcal A(2))=h^0(\Sigma, \mathcal O_{\Sigma})=1.$$
From (\ref{formulae}) and Corollary \ref{3terms} we get $\epsilon=n-1$. In this case we have
$\psi(n-1)=-2$, $x=1$, $z=1$, and
by Lemma \ref{acca},  $y=h^0(\Sigma, \mathcal O_{\Sigma}(1))=h-2$. From (\ref{nrho}) it follows $\rho=1$.
\end{proof}

\smallskip
$(iv)$ {\it Castelnuovo's base-point free pencil trick}.
Let $\mathcal L$ be an invertible sheaf on $\Sigma$. Fix a subspace $P$ of $H^0(\Sigma, \mathcal L)$ of dimension $2$.
Assume that $P$ generates $\mathcal L$ at every point.
Then this subspace $P$
induces a Koszul exact sequence \cite[p. 447]{E}:
$$
0\to \mathcal L^{-1}\to P\otimes \mathcal O_{\Sigma}\to \mathcal L\to 0.
$$
Let $\mathcal F$ be a coherent sheaf on $\Sigma$.
By tensoring with $\mathcal F$, since $\mathcal L$ is locally free, we obtain the
exact sequence
$$
0\to \mathcal L^{-1}\otimes \mathcal F\to P\otimes \mathcal F\to \mathcal L\otimes \mathcal F\to 0.
$$
We deduce:
\begin{equation*}
2\,h^0(\Sigma,\mathcal F)\leq h^0(\Sigma,\mathcal L\otimes \mathcal F)+h^0(\Sigma, \mathcal L^{-1}\otimes \mathcal F).
\end{equation*}
In the special case $\mathcal L=\mathcal O_{\Sigma}(1)$, we obtain:
\begin{equation}\label{bpfpt}
2\,h^0(\Sigma,\mathcal F)\leq h^0(\Sigma,\mathcal F(1))+h^0(\Sigma,\mathcal F(-1)).
\end{equation}

\smallskip
$(v)$ {\it Hopf's theorem}. Let $K=K(\Sigma)$ be the field of rational functions on $\Sigma$.
Let $U$ and $V$ be finite-dimensional subspaces of $K$ over $\mathbb C$ with positive dimension.
Let $UV$ be the subspace of $K$ generated by the products $uv$, as $u$ ranges over $U$ and $v$ ranges over $V$.
By Hopf's theorem \cite[Proposition 1.3, p. 702]{LS} we know  that
$$
\dim UV\geq \dim U+\dim V-1.
$$

\smallskip
$(vi)$ {\it Vosper's theorem}. In the case where $\dim U\geq 2$ and $\dim V\geq 2$, and  equality holds
$$
\dim UV = \dim U+\dim V-1,
$$
by \cite[Theorem 33, p. 445]{BSZ} we know there exist elements $g,g',a\in K$
such that $U$ and $V$ have bases of the form
$$
\{g,ga,\dots,ga^{\dim U-1}\}, \quad \quad \{g',g'a,\dots,g'a^{\dim V-1}\}.
$$
It follows that, given $\gamma=gg'$, $UV$ has as a basis
$$
\{\gamma, \gamma a,\dots,\gamma a^{\dim UV-1}\}.
$$

\smallskip
$(vii)$
Let us see how  Hopf's theorem and Vosper's theorem apply in our context.

\smallskip
Consider the natural map
$$
\mathcal A\otimes \mathcal A^{\vee}\to \mathcal O_{\Sigma},
$$
where $\mathcal A^{\vee}={ \mathcal Hom}_{\Sigma}(\mathcal A,\mathcal O_{\Sigma})$.
Tensoring by $\mathcal O_{\Sigma}(1)$, we obtain
$$
\mathcal A\otimes_{\mathcal O_{\Sigma}} \left(\mathcal A^{\vee}(1)\right)\to \mathcal O_{\Sigma}(1).
$$
Passing to global sections, we derive a map
\begin{equation}\label{mappa}
\varphi:U\otimes_{\mathbb C} V\to W,
\end{equation}
where
$$U=H^0(\Sigma,\mathcal A),\quad V=H^0(\Sigma,\mathcal A^{\vee}(1)),\quad
W=H^0(\Sigma,\mathcal O_{\Sigma}(1)).$$
Observe that
\begin{equation}\label{y}
H^1(\Sigma,\mathcal A(1))^{\vee}={\text{Hom}}_{\Sigma}(\mathcal A(1),\mathcal \omega_{\Sigma})={\text{Hom}}_{\Sigma}(\mathcal A,\mathcal O_{\Sigma}(1))=
H^0(\Sigma, \mathcal A^{\vee}\otimes\mathcal O_{\Sigma}(1)).
\end{equation}
Therefore, we have (compare with Lemma \ref{acca} and (\ref{setx})):
\begin{equation*}
x=\dim U, \quad  y=\dim V, \quad h-2=\dim W.
\end{equation*}
Since $\mathcal A$ is a torsion-free coherent sheaf of rank one on $\Sigma$, so is
$\mathcal A^{\vee}(1)$. Therefore, { we can view $U$, $V$, and $W$ as vector subspaces of the field of rational
functions $K(\Sigma)$ on the curve $\Sigma$, and $\varphi$ as the multiplication map}. Consequently,
${\text{Im}}\varphi=UV$. By Hopf's theorem, we deduce that
\begin{equation}\label{ht}
x>0,\, y>0\quad \implies \quad x+y-1\leq h-2.
\end{equation}
And by Vosper's theorem,
\begin{equation}\label{vt}
x\geq 2,\, y\geq 2\quad \implies \quad x+y-1< h-2.
\end{equation}
Indeed, if the equality $x+y-1= h-2$ were to hold, then we would have $UV=W$, and by Vosper's Theorem
the linear system $|\mathcal O_{\Sigma}(1)|$, which is very ample, would map $\Sigma$ to a rational normal curve in projective space.
This is not possible, because the genus of $\Sigma$ is $n+1>0$.


\section{The proof of Theorem \ref{unico} in the case $\epsilon=n-1$}

In this section, we will prove Theorem \ref{unico} in the case $\epsilon=n-1$.
More precisely,
we will prove the following slightly more general  result:

\begin{proposition}\label{new-1}
Let $X\subseteq\mathbb P^r$ be a smooth Calabi-Yau variety of
dimension $3$ and degree $n$. Assume that the Picard group of $X$ is
generated by the hyperplane section class. Assume that $h^1(X,\mathcal O_X)=0$ and that
$$
h^0(X,\mathcal O_X(1))\geq \frac{n-1}{2}.
$$
Let $C$ be  a closed subscheme of $X$ of dimension $1$, arithmetic genus $p_a(C)$,
and of degree $d\gg n$, which is a multiple of $n$, i.e. $d=(m+1)n$. Then we have:
\begin{equation*}
p_a(C) \leq \frac{d^2}{2n}+\frac{d}{2}+1.
\end{equation*}
Moreover, if equality holds and $h^0(X,\mathcal O_X(1))\geq  \frac{n}{2}$, then
$C$ is the complete intersection on $X$ with a hyperplane $H$ and a hypersurface $F$, namely $C=X\cap H\cap F$.
\end{proposition}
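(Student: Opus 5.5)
The plan is to show that the quantity $\rho$ of Corollary \ref{stime} satisfies $\rho\leq 1$ when $\epsilon=n-1$, and to analyse when $\rho=1$. Since $d=(m+1)n$ we have $d-1=mn+(n-1)$, so $\epsilon=n-1$ and $\psi(n-1)=-2$. By (\ref{nrho}),
$$\rho=1+x+y+z-h .$$
By (\ref{degree}), $\deg\mathcal A=0$. I split according to whether $x=h^0(\Sigma,\mathcal A)$ vanishes.

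\emph{Case $x>0$.} A nonzero section gives $0\to\mathcal O_\Sigma\to\mathcal A\to\mathcal T\to 0$, as in the proof of Proposition \ref{m+1}. Here $\mathcal T$ has finite support and length $\deg\mathcal A=0$, so $\mathcal A\cong\mathcal O_\Sigma$. Lemma \ref{rhoic} then gives $\rho=1$, hence $p_a(C)\leq \frac{d^2}{2n}+\frac d2+1$.

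For the equality part, I note that $\mathcal I(m+1)\cong\mathcal O_\Sigma$ means $\Gamma$ is cut out on $\Sigma$ by a hypersurface of degree $m+1$. Equality also forces all $\delta_i=0$ in Corollary \ref{stime}, which gives surjectivity of $H^0(\mathcal I_C(i))\to H^0(\mathcal I_\Gamma(i))$. So the degree-$(m+1)$ form cutting $\Gamma$ lifts to a hypersurface $F$ containing $C$. Then $C\subseteq S\cap F=X\cap H\cap F$, and both have degree $(m+1)n=d$. To conclude $C=X\cap H\cap F$, I compare arithmetic genera: a complete intersection has the maximal value, and a purely one-dimensional $C$ of the same degree contained in it with the same genus must coincide with it.

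\emph{Case $x=0$.} Then $\mathcal A\not\cong\mathcal O_\Sigma$.
\begin{itemize}
\item By duality ($\omega_\Sigma=\mathcal O_\Sigma(2)$), $z=h^0(\Sigma,\mathcal A^\vee)$, and $\mathcal A^\vee$ is torsion-free of rank one and degree $0$. By the same argument as above, $z\leq 1$ with equality only if $\mathcal A^\vee\cong\mathcal O_\Sigma$. Since $\Sigma$ is Gorenstein, $\mathcal A$ is reflexive, so that would give $\mathcal A\cong\mathcal O_\Sigma$. Hence $z=0$.
\item To bound $y=h^0(\Sigma,\mathcal A^\vee(1))$ (using (\ref{y})), I apply the base-point free pencil trick (\ref{bpfpt}) to $\mathcal F=\mathcal A^\vee(1)$:
$$2y\leq h^0(\mathcal A^\vee(2))+h^0(\mathcal A^\vee)=h^0(\mathcal A^\vee(2)).$$
By Riemann--Roch and duality, $h^0(\mathcal A^\vee(2))=2n+1-\pi_X+h^0(\mathcal A)=n$, so $y\leq\lfloor n/2\rfloor$.
\item Since $h$ is an integer with $h\geq\frac{n-1}{2}$, we get $h\geq\lceil\frac{n-1}2\rceil$ and hence $y\leq h$ in both parities of $n$. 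Therefore $\rho=1+y-h\leq 1$, which gives the bound.
\end{itemize}

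The main obstacle is the equality statement in the case $x=0$ under $h\geq n/2$. There $\rho=1$ forces $y=h=n/2$ with $n$ even, equality in the pencil trick for every pencil in $|\mathcal O_\Sigma(1)|$, and all $\delta_i=0$. I would try to exclude this first via Vosper's theorem, (\ref{vt}), applied to the multiplication $H^0(\mathcal A^\vee(1))\otimes H^0(\mathcal O_\Sigma(1))\to H^0(\mathcal A^\vee(2))$. If the dimension count does not close, I would instead use that equality in the Koszul sequence for all pencils makes $\mathcal A^\vee(1)$ behave like a pencil-type sheaf, contradicting that $\Sigma$ has genus $n+1$ and is embedded by $|\mathcal O_\Sigma(1)|$. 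As a fallback, I would use the vanishing of the $\delta_i$ to lift the section structure from $\Sigma$ back to $S$. Either route leads back to the case $\mathcal A\cong\mathcal O_\Sigma$.
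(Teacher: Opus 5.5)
You prove the inequality correctly, in essentially the paper's way: you run the pencil trick on $\mathcal A^\vee(1)$ rather than $\mathcal A(1)$, which is equivalent by duality. Your treatment of the case $\mathcal A\cong\mathcal O_\Sigma$ is also correct. Your genus comparison works even without assuming $C$ is pure, since $\mathcal I_{C,S\cap F}$ has constant Hilbert polynomial $p_a(C)-p_a(S\cap F)$.

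The gap is the characterization in the case $x=0$. There you must show $y<n/2$ when $n$ is even and $h=n/2$. This case is not vacuous: $X_{2,2,2,2}$ has $n=16$, $h=8$. None of your three routes does this.

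The Vosper route cannot close, for dimension reasons. Take $V=H^0(\Sigma,\mathcal A^\vee(1))$ and $W=H^0(\Sigma,\mathcal O_\Sigma(1))$. Hopf's lower bound for $\dim VW$ is $\frac n2+(\frac n2-2)-1=n-3$. In the situation to be excluded, every base-point-free pencil $P\subset W$ already maps $P\otimes V$ isomorphically onto $H^0(\Sigma,\mathcal A^\vee(2))$: the kernel is $H^0(\Sigma,\mathcal A^\vee)=0$ and both sides have dimension $n$. So $\dim VW=n$, Hopf's inequality is strict, and Vosper's theorem, which only describes the equality case, says nothing.

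Your other two routes (``pencil-type sheaf'', ``lifting to $S$'') are not arguments. Moreover, $x=0$ means $\mathcal A\not\cong\mathcal O_\Sigma$, so the only way to ``lead back'' to that case is through a contradiction you have not produced.

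The missing idea is to treat all pencils at once by a degree count on the Grassmannian.

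\begin{itemize}
\item On $\mathbb G=\mathbb G(2,W)$, with tautological subbundle $\mathcal S\subseteq W\otimes\mathcal O_{\mathbb G}$, multiplication gives a bundle map $\Psi:\mathcal S\otimes V\to H^0(\Sigma,\mathcal A^\vee(2))\otimes\mathcal O_{\mathbb G}$ between bundles of rank $n$.
\item Its determinant is a section of $((\det\mathcal S)^{\vee})^{\otimes n/2}=\mathcal O_{\mathbb G}(n/2)$.
\item By the observation above, $\det\Psi$ does not vanish at any base-point-free pencil. Hence it is nonzero, and its divisor is supported in the locus $D$ of pencils having a base point on $\Sigma$.
\item $D$ is the Chow hypersurface of $\Sigma$, an irreducible divisor of degree $n$. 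A nonzero effective divisor in $|\mathcal O_{\mathbb G}(n/2)|$ supported on $D$ would have to be $aD$ with $a\geq 1$, hence of degree $\geq n>n/2$.
\end{itemize}

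This contradiction gives $y<n/2$, hence $\rho<1$ whenever $x=0$. So equality forces $\mathcal A\cong\mathcal O_\Sigma$, and your first case then finishes the proof.
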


\begin{proof} We keep the notations stated in the previous section.

Since $\epsilon=n-1$, by (\ref{degree}) we know that $\deg \mathcal A=0$.
Therefore, we have
$$
0\leq h^0(\Sigma, \mathcal A)\leq 1.
$$

\smallskip
Let us distinguish the two cases.

\smallskip
$\bullet$ If $h^0(\Sigma, \mathcal A)=1$, then, given that $\deg\mathcal A=0$, it follows that
$\mathcal A=\mathcal O_{\Sigma}$. But then,
from Lemma \ref{rhoic}, we know that $\rho=1$, and we are done by Corollary \ref{stime}.

\smallskip
Let us examine what happens in this case if $C$ has the maximal genus $$p_a(C) = \frac{d^2}{2n}+\frac{d}{2}+1.$$
In this case, by Corollary \ref{stime}, it follows that $\delta_{m+1}=0$. On the other hand, since
$\mathcal A=\mathcal O_{\Sigma}$, we have $h_{\Gamma}(m+1)=h_{\Sigma}(m+1)-1$.
Therefore,
there exists a hypersurface of degree $m+1$ that contains
$\Gamma$ but does not contain $\Sigma$. Since $\delta_{m+1}=0$, this hypersurface
must extend to a hypersurface $F$ of degree $m+1$ containing $C$ but not $S$.
Since $S$ is Cohen-Macaulay, by degree considerations, we  have that $C=F\cap S=F\cap H\cap X$
is a complete intersection on $X$.

\smallskip $\bullet$
To complete the proof of Proposition \ref{new-1}, it suffices to show that, in the remaining case $h^0(\Sigma, \mathcal A)=0$,
we have $\rho\leq 1$, and $\rho<1$ if $h^0(X,\mathcal O_X(1))\geq \frac{n}{2}$.

\smallskip
We begin by proving that in this case $d-h_{\Gamma}(m+3)=0$. By (\ref{formulae}), this is equivalent to proving that
$h^1(\Sigma, \mathcal A(2))=0$. By the definition of the dualizing sheaf, we have:
$$
h^1(\Sigma, \mathcal A(2))=\dim {\text{Hom}} (\mathcal A(2),\omega_{\Sigma}).
$$
On the other hand, $\omega_{\Sigma}=\mathcal O_{\Sigma}(2)$. Therefore,
$$
h^1(\Sigma, \mathcal A(2))=\dim {\text{Hom}} (\mathcal A(2),\mathcal O_{\Sigma}(2))=\dim {\text{Hom}} (\mathcal A,\mathcal O_{\Sigma}).
$$
We conclude by observing that ${\text{Hom}} (\mathcal A,\mathcal O_{\Sigma})=0$. Indeed, otherwise, as before,
we would have $\mathcal A=\mathcal O_{\Sigma}$, which contradicts the fact that $h^0(\Sigma, \mathcal A)=0$.

\smallskip
Thus, by (\ref{nrho}) and (\ref{formulae}), we have
$$
\rho=1+y-h=1+h^0(\Sigma, \mathcal A(1))-h^0(X,\mathcal O_X(1)).
$$

\smallskip$\bullet$
From the base-point free pencil trick (\ref{bpfpt}) with $\mathcal F=\mathcal A(1)$, noting that
$h^0(\Sigma, \mathcal A(2))=n$ because $h^1(\Sigma, \mathcal A(2))=0$ (see (\ref{formulae})), we obtain
$$
h^0(\Sigma, \mathcal A(1))\leq \left\lfloor\frac{n}{2}\right\rfloor.
$$
Therefore, we have
$$
h^0(X, \mathcal O_X(1))\geq \frac{n-1}{2}\quad \implies \quad \rho\leq 1.
$$

\smallskip$\bullet$
To conclude, it suffices to show that
$$
h^0(X, \mathcal O_X(1))\geq \frac{n}{2}\quad \implies \quad \rho< 1.
$$
The same computation as above proves this implication when $n$ is odd. We may therefore assume $n$ is even.
In this case, it will suffice to show that we must have
$$
h^0(\Sigma, \mathcal A(1))<\frac{n}{2}.
$$
We proceed by contradiction. That is, we assume for the sake of contradiction that
for every pencil $$P\subset H^0(\Sigma,\mathcal O_{\Sigma}(1))$$ without base points,
the natural map given by the base-point free pencil trick
$$
\Psi_P:P\otimes H^0(\Sigma,\mathcal A(1))\to H^0(\Sigma,\mathcal A(2))
$$
is an isomorphism.

\smallskip
1) For simplicity, we introduce the following notation:
$$
W=H^0(\Sigma,\mathcal O_{\Sigma}(1)), \quad E=H^0(\Sigma,\mathcal A(1)), \quad F=H^0(\Sigma,\mathcal A(2)).
$$
We have
$$
\dim E=\frac{n}{2}, \quad \dim F=n.
$$
Let
$
\mathbb G=\mathbb G(2,W)
$
denote the Grassmannian of pencils in $W$. Let
$
\mathcal S\subseteq W\otimes \mathcal O_{\mathbb G}
$
be the locally free sheaf associated with the tautological bundle.

\smallskip
2) The multiplication map
$
W\otimes E\to F
$
induces a bundle map:
$$
\Psi: \mathcal S\otimes E\to F\otimes \mathcal O_{\mathbb G}.
$$
On the fiber corresponding to a pencil $P$, even if it has a base point, this map coincides with the map $\Psi_P$ mentioned above:
$$
\Psi_P:P\otimes E\to F.
$$
Since the bundles $\mathcal S\otimes E$ and $F\otimes \mathcal O_{\mathbb G}$ have the same rank $n$, we can
consider the determinant of the map $\Psi$:
$$
\det\Psi\in H^0(\mathbb G, (\det (\mathcal S\otimes E))^{\vee}\otimes \det F)=
H^0(\mathbb G, (\det (\mathcal S\otimes E))^{\vee}).
$$
On the other hand,
$$
\det (\mathcal S\otimes E)=(\det\mathcal S)^{\frac{n}{2}}\otimes (\det E)^{2}.
$$
Passing to the duals, we obtain
$$
(\det (\mathcal S\otimes E))^{\vee}=((\det \mathcal S)^{\vee})^{\frac{n}{2}}=\mathcal O_{\mathbb G}\left(\frac{n}{2}\right).
$$
Thus,
$$
\det\Psi\in H^0\left(\mathbb G,\mathcal O_{\mathbb G}\left(\frac{n}{2}\right)\right).
$$
Based on the previous observation, $\det\Psi$ is a non-zero section. Let  $\operatorname{div}(\det\Psi)$ be
the  divisor of zeros of $\det\Psi$. We have:
$$\operatorname{div}(\det\Psi)\in \left|\mathcal O_{\mathbb G}\left(\frac{n}{2}\right)\right|.$$

\smallskip
3) Now consider
$$
D=\{P\in\mathbb G\,:\,{\text{$P$ has a base point on $\Sigma$}}\}.
$$
Since
$$
P\in D\quad\iff\quad \mathbb P\left(P^{\perp}\right)\cap \Sigma\neq \emptyset,
$$
$D$ is the Chow hypersurface of $\Sigma$ in $\mathbb P\left(W^{\vee}\right)$. Since $\Sigma$ is an integral curve of degree
$n$, $D$ is an irreducible divisor of degree $n$ \cite[Proposition 2.2, p. 99]{GKZ}:
$$
D\in \left|\mathcal O_{\mathbb G}\left(n\right)\right|.
$$
Since $D$ is irreducible and
$$
\operatorname{Supp}\left(\operatorname{div}(\det\Psi)\right)\subseteq D,
$$
we have reached a contradiction based on degrees.

\smallskip
This concludes the proof of Proposition \ref{new-1}.
\end{proof}


\section{The proof of Theorem \ref{unico} in the case $\epsilon<n-1$}

We now proceed to prove Theorem \ref{unico} for $\epsilon < n-1$.
We will show that
$\rho < 1$, except when $n=16$ and $\epsilon=7$, in which case we will only be able to prove that $\rho \leq 1$.
We keep all the notations stated in Section $2$.

\medskip
\begin{proof}[Proof of Theorem \ref{unico} in the case $\epsilon<n-1$]
By Corollary \ref{3terms} and (\ref{nrho}), we have (compare with (\ref{acca0}) and (\ref{setx}))
$$
\rho=\psi(\epsilon)+3+x+y-h.
$$

\smallskip$\bullet$
First case: $x=0$.

\smallskip
By Corollary \ref{3terms} and (\ref{formulae}), we have
$$
h^0(\Sigma,\mathcal A(2))=2n-1-\epsilon.
$$
Applying the base-point free pencil trick (\ref{bpfpt}) to $\mathcal F=\mathcal A(1)$,
we deduce that
$$
y=h^0(\Sigma,\mathcal A(1))+1+\epsilon-n\leq \frac{2n-1-\epsilon}{2}+1+\epsilon-n=
\frac{1+\epsilon}{2}.
$$
It follows:
$$
\rho\leq \psi(\epsilon)+3+\frac{1+\epsilon}{2}-h=1+\frac{1+\epsilon}{2n}(2n-1-\epsilon)-h
$$
$$
=1+\frac{n}{2}-\frac{1}{2n}(n-1-\epsilon)^2-h<1+\frac{n}{2}-h,
$$
and so $\rho<1$ because $h\geq \frac{n}{2}$.

\smallskip$\bullet$
Second case: $y=0$.

\smallskip
Applying the base-point free pencil trick (\ref{bpfpt}) to $\mathcal F=\mathcal A$, and noting that
$$h^0(\Sigma,\mathcal A(-1))=0$$ because $\deg \mathcal A(-1)=-1-\epsilon<0$ (see (\ref{geqp})), we obtain
(compare with (\ref{formulae}) and (\ref{setx}))
$$
x\leq \frac{1}{2}h^0(\Sigma,\mathcal A(1))=\frac{n-1-\epsilon}{2}.
$$
It follows:
$$
\rho\leq \psi(\epsilon)+3+\frac{n-1-\epsilon}{2}-h=1+\frac{n+1+\epsilon}{2n}(n-1-\epsilon)-h
$$
$$
=1+\frac{n^2-(\epsilon+1)^2}{2n}-h<1+\frac{n}{2}-h,
$$
and so $\rho<1$ because $h\geq \frac{n}{2}$.

\smallskip$\bullet$
Third case: $x>0$ and $y>0$. In this case, by Hopf's theorem (\ref{ht}), we have:
$$
x+y\leq h-1.
$$

\smallskip
We shall examine four  subcases.

\smallskip
(a) $x+y=h-1$, $x\geq 2$, and $y\geq 2$.

\smallskip
This case is not possible by Vosper's theorem (\ref{vt}).

\smallskip
(b) $x+y=h-1$, $x=1$, and $y=h-2$.

\smallskip
This case is not possible.

\smallskip
In fact, a non-zero global section of $\mathcal A$ induces an exact sequence
$$
0\to\mathcal O_{\Sigma}\to \mathcal A\to\mathcal Q\to 0,
$$
where $\mathcal Q$ is a coherent sheaf supported in dimension zero, with $$\deg\mathcal Q=\deg \mathcal A=n-1-\epsilon>0.$$
Dualizing yields
the exact sequence
$$
0\to \mathcal Q^{\vee}\to \mathcal A^{\vee}\to \mathcal O_{\Sigma}\to
{\mathcal Ext}^1_{\Sigma}(\mathcal Q,\mathcal O_{\Sigma})\to {\mathcal Ext}^1_{\Sigma}(\mathcal A,\mathcal O_{\Sigma}).
$$
Since $\Sigma$ is an integral Gorenstein curve and $\mathcal A$ is a torsion-free coherent sheaf of rank one, we have:
$$
\mathcal Q^{\vee}={\mathcal Ext}^1(\mathcal A,\mathcal O_{\Sigma})=0.
$$
Therefore, the sequence reduces to:
$$
0 \to \mathcal A^{\vee}\to \mathcal O_{\Sigma}\to
{\mathcal Ext}^1_{\Sigma}(\mathcal Q,\mathcal O_{\Sigma})\to 0,
$$
with ${\mathcal Ext}^1_{\Sigma}(\mathcal Q,\mathcal O_{\Sigma})$ supported in dimension zero. Moreover,
$$
\deg {\mathcal Ext}^1_{\Sigma}(\mathcal Q,\mathcal O_{\Sigma})=\deg \mathcal Q>0
$$
because $\deg \mathcal A^{\vee}=-\deg\mathcal A$.
Tensoring by $\mathcal O_{\Sigma}(1)$ yields:
$$
0 \to \mathcal A^{\vee}(1)\to \mathcal O_{\Sigma}(1)\to
{\mathcal Ext}^1_{\Sigma}(\mathcal Q,\mathcal O_{\Sigma})\to 0.
$$
Since the degree of ${\mathcal Ext}^1_{\Sigma}(\mathcal Q,\mathcal O_{\Sigma})$ is $>0$, the map on global sections induced by $\mathcal O_{\Sigma}(1)\to
{\mathcal Ext}^1_{\Sigma}(\mathcal Q,\mathcal O_{\Sigma})$ cannot be zero. Thus (compare with (\ref{y})):
$$
y=h^1(\Sigma,\mathcal A(1))=h^0(\Sigma, \mathcal A^{\vee}(1))\leq h^0(\Sigma, \mathcal O_{\Sigma}(1))-1=h-3.
$$
This contradicts the hypothesis $y=h-2$. Therefore, subcase (b) cannot occur.

\smallskip
(c) $x+y=h-1$ and $x=h-2$ and $y=1$.

\smallskip
This case is not possible.

\smallskip
Recall that $y=h^1(\Sigma,\mathcal A(1))=h^0(\Sigma, \mathcal A^{\vee}(1))=1$.
A non-zero global section of $\mathcal A^{\vee}(1)={\text{Hom}}(\mathcal A,\mathcal O_{\Sigma}(1))$ induces an exact sequence
$$
0\to\mathcal A\to \mathcal O_{\Sigma}(1)\to\mathcal Q\to 0,
$$
where $\mathcal Q$ is a coherent sheaf supported in dimension zero, with
$$\deg\mathcal Q=n-\deg\mathcal A=1+\epsilon>0.$$
Since the map $\mathcal O_{\Sigma}(1)\to\mathcal Q$ cannot be zero on global sections,
it follows that $x\leq h-3$, which is impossible.

\smallskip
(d) $x+y\leq h-2$.

\smallskip
Since by hypothesis $\psi(\epsilon)\leq 0$, we have:
$$
\rho=\psi(\epsilon)+3+x+y-h\leq 3+x+y-h\leq 1.
$$

\smallskip
To conclude, we show that if $n< 16$ (or $n=16$ and $\epsilon\neq 7$), then
$$
\psi(\epsilon)<0.
$$

\smallskip
To this end, we notice that the function $\psi(\epsilon)$
attains an absolute maximum at $\epsilon=\frac{n-2}{2}$. Therefore,
$$
\psi(\epsilon)\leq \psi\left(\frac{n-2}{2}\right)=\frac{1}{2n}\left[\frac{n}{4}(n-16)\right].
$$
Thus, $\psi(\epsilon)<0$
for $n<16$. And this holds true even if $n=16$, provided $\epsilon\neq 7$.
\end{proof}


\section{The proof of Theorem \ref{2,2,2,2}}

\begin{proof}[Proof of Theorem \ref{2,2,2,2}]

Compared to the proof of the Theorem \ref{unico}, the proof of Theorem \ref{2,2,2,2} contains no new arguments.
Therefore, we will proceed more quickly. We keep all the notations stated before.

For simplicity, let
$$
\mathcal B=\mathcal A^{\vee}(1).
$$

\smallskip
We have:
$$
n=16,\quad h=8,\quad  \epsilon=7, \quad \rho=x+y-5, \quad \deg\mathcal A=\deg\mathcal B=8.
$$
We need to show that $x+y\leq 5$.
By Theorem \ref{unico} we already know that $\rho\leq 1$. We will assume for the sake of contradiction that $\rho=1$, i.e., that
$x+y=6$.
As in the proof of Theorem \ref{unico}, we can assume $x>0$ and $y>0$. Thus, the cases to be ruled out are:
$$
(x,y)=(1,5), \quad (x,y)=(2,4),\quad (x,y)=(3,3),\quad (x,y)=(4,2),\quad (x,y)=(5,1).
$$

\medskip$\bullet$
{\it Step 1. The cases $(x,y)=(1,5)$ and $(x,y)=(5,1)$ are not possible.}

\smallskip
First, suppose $x=h^0(\Sigma,\mathcal A)=1$ and $y=h^0(\Sigma,\mathcal B)=5$.

A non-zero section of $\mathcal A$ induces
\[
0\longrightarrow\mathcal O_\Sigma\longrightarrow\mathcal A\longrightarrow \mathcal Q\longrightarrow0,
\qquad \operatorname{deg}\mathcal Q=\deg\mathcal A=8.
\]
By dualizing and tensoring by $\mathcal O_\Sigma(1)$, we obtain
\[
0\longrightarrow\mathcal B\longrightarrow\mathcal O_\Sigma(1)\longrightarrow \mathcal T\longrightarrow0,
\qquad \operatorname{deg}\mathcal T=8.
\]
At the level of global sections, $V=H^0(\Sigma,\mathcal B)$ is identified with a hyperplane in $W=H^0(\Sigma,\mathcal O_{\Sigma}(1))$. Thus, there exists a unique point
$q\in\mathbb P\left(W^\vee\right)=\mathbb P^5$
such that $V$ is the space of linear forms vanishing at $q$.

If $q\notin\Sigma$, the sections of $V$ generate $\mathcal O_\Sigma(1)$ everywhere. Since they originate from $H^0(\Sigma,\mathcal B)$,
the map to $\mathcal T$ is zero, implying $\mathcal T=0$. This contradicts the fact that $\operatorname{deg}\mathcal T=8$.

If $q\in\Sigma$, the subsheaf generated by $V$ is $\mathcal I_{q/\Sigma}(1)$. Therefore,
$\mathcal I_{q/\Sigma}(1)\subseteq\mathcal B$,
and $\mathcal T$ is a quotient of
$\mathcal O_\Sigma(1)/\mathcal I_{q/\Sigma}(1)\simeq k(q)$.
It follows that $\operatorname{deg}(\mathcal T)\leq1$, in contrast with $\deg\mathcal T=8$.  Thus, the case
$(x,y)=(1,5)$ is impossible.

\smallskip
Now suppose $x=h^0(\Sigma,\mathcal A)=5,\quad y=h^0(\Sigma,\mathcal B)=1$.

\smallskip
A non-zero section of $\mathcal B$ induces
\[
0\longrightarrow\mathcal O_\Sigma\longrightarrow\mathcal B\longrightarrow \mathcal Q'\longrightarrow0,
\qquad \operatorname{deg} Q'=\deg\mathcal B=8.
\]
By dualizing and tensoring with $\mathcal O_\Sigma(1)$, we obtain
\[
0\longrightarrow\mathcal B^\vee(1)\longrightarrow\mathcal O_\Sigma(1)
\longrightarrow \mathcal T'\longrightarrow0,
\qquad \operatorname{deg} \mathcal T'=8.
\]
Since $\mathcal A$ is a rank-one reflexive sheaf on the Gorenstein curve $\Sigma$, we have
$\mathcal B^\vee(1)=\bigl(\mathcal A^\vee(1)\bigr)^\vee(1)\simeq\mathcal A$. Therefore, the
previous sequence becomes
\[
0\longrightarrow\mathcal A\longrightarrow\mathcal O_\Sigma(1)\longrightarrow \mathcal T'\longrightarrow0.
\]
Now, $H^0(\Sigma,\mathcal A)$ is a hyperplane of $W$. The same argument as before
shows that the quotient $\mathcal T'$ is either zero or has degree at most one. This contradicts $\operatorname{deg} \mathcal T'=8$. Therefore,
the case $(x,y)=(5,1)$ is also impossible.

\medskip
{\it Having concluded this step, we may assume
$$
x\geq 2,\quad y\geq 2.
$$
This new hypothesis will be applied in the next step}.

\medskip$\bullet$
{\it Step 2. If $\rho=1$, then the map $\varphi: U\otimes V\to W$ in (\ref{mappa}) must be surjective.}

\smallskip
By Hopf's theorem, there are only two possibilities:
$$
\dim UV=6, \quad {\text{or}}\quad \dim UV=5.
$$
If $\dim UV=5$, the minimal Hopf equality $\dim UV=\dim U+\dim V-1$ holds.
By Vosper's theorem, we know there exist $a,b,f\in K(\Sigma)$ such that
$$
UV=(ab)\cdot {\operatorname {Span}}(1,f,f^2,f^3,f^4)\subset W=H^0(\Sigma,\mathcal O_{\Sigma}(1))=\mathbb C^6.
$$
Since $\mathcal O_{\Sigma}(1)$ is very ample, this would imply that the six independent quadrics vanishing on the rational normal quartic curve in $\mathbb P^4$ also vanish on $\Sigma$.
This is impossible, because $\Sigma$ is a complete intersection of four quadrics.

\medskip$\bullet$
{\it Step 3. Since the map $\varphi: U\otimes V\to W$ is surjective, the sheaves $\mathcal A$ and $\mathcal A^{\vee}(1)$ are locally free and globally generated.}

\smallskip
If $\varphi$ is surjective, then so is
$\mathcal A \otimes \mathcal A^{\vee}\to \mathcal O_{\Sigma}$.
Fix $p\in\Sigma$ and set
\[
R:=\mathcal O_{\Sigma,p},\qquad M:=\mathcal A_p.
\]
Then the map $M\otimes_R M^\vee\to R$
is surjective. This implies the existence of $x\in M$ and $f\in M^\vee$ such that $f(x)=1$.
Therefore, the map
$r\in R\to rx\in M$
has $f:M\to R$ as a retraction. It follows that
$M\simeq xR\oplus\ker f\simeq R\oplus\ker f$.
Since $M$ has rank $1$, the module $\ker f$ has rank zero. Moreover, it is torsion-free, being a submodule of the module $M$,
which is torsion-free. Over an integral domain $R$, a torsion-free module of rank zero is zero. Thus
$M\simeq R$.
Since $p$ is arbitrary, this proves that $\mathcal A$ is locally free of rank $1$. Consequently,
$\mathcal A^{\vee}(1)$ is as well.

From this and the fact that $\varphi$ is surjective, it follows that $\mathcal A$ and $\mathcal A^{\vee}(1)$ are globally
generated. Indeed, suppose for the sake of contradiction that $p$ is a base point of $U=H^0(\Sigma, \mathcal A)$. Since $\mathcal A_p=\mathcal O_{\Sigma, p}$,
we have $u(p)=0$ for every $u\in U$, and thus
$\varphi(u\otimes v)(p)=0$ for every $u\in U, v\in V$.
Since $\varphi$ is surjective, all global sections of $\mathcal O_{\Sigma}(1)$ would vanish at $p$, which is impossible. Therefore, $\mathcal A$ is globally generated. The same argument proves that $\mathcal A^{\vee}(1)$ is globally generated.

\medskip$\bullet$
{\it Step 4. The cases $(x,y)=(2,4)$ and $(x,y)=(4,2)$ are not possible.}

\smallskip
First, suppose $(x,y)=(2,4)$.

\smallskip
Then $U=H^0(\Sigma,\mathcal A)$ is a base-point free pencil. By the base-point free pencil trick,
we have
\[
0\longrightarrow\mathcal A^{-1}
\longrightarrow U\otimes\mathcal O_\Sigma
\longrightarrow\mathcal A\longrightarrow0.
\]
Tensoring by $\mathcal B:=\mathcal A^{\vee}(1)$ yields
\[
0\longrightarrow\mathcal B\otimes\mathcal A^{-1}
\longrightarrow U\otimes\mathcal B
\longrightarrow\mathcal A\otimes\mathcal B\simeq \mathcal O_{\Sigma}(1)
\longrightarrow0.
\]
Passing to global sections, we have
$\ker\varphi\simeq
H^0\bigl(\Sigma,\mathcal B\otimes\mathcal A^{-1}\bigr)$.
On the other hand,
$U\otimes V$ has dimension $8$, and
$\dim W=6$.
The surjectivity of $\varphi$ implies
$\dim\ker\varphi=2$,
and thus
$h^0\bigl(\Sigma,\mathcal B\otimes\mathcal A^{-1}\bigr)=2$.
But $\deg (\mathcal A^{\vee}(1)\otimes \mathcal A^{-1})=8-8=0$. Therefore,
$h^0\bigl(\Sigma,\mathcal B\otimes\mathcal A^{-1}\bigr)\leq 1$,
a contradiction.

\smallskip
Now assume $(x,y)=(4,2)$.

\smallskip
This time, it is $V=H^0(\Sigma,\mathcal B)$ that is a base-point free pencil. We have:
\[
0\longrightarrow\mathcal B^{-1}
\longrightarrow V\otimes\mathcal O_\Sigma
\longrightarrow\mathcal B\longrightarrow0.
\]
Tensoring by $\mathcal A$ we obtain:
\[
0\longrightarrow\mathcal A\otimes\mathcal B^{-1}
\longrightarrow V\otimes\mathcal A
\longrightarrow\mathcal A\otimes\mathcal B\simeq \mathcal O_{\Sigma}(1)
\longrightarrow0.
\]
On global sections we get
$\ker\varphi\simeq
H^0\bigl(\Sigma,\mathcal A\otimes\mathcal B^{-1}\bigr)$.
Since
$\dim(U\otimes V)=8$, and
$\dim W=6$,
surjectivity implies
$h^0\bigl(\Sigma, \mathcal A\otimes\mathcal B^{-1}\bigr)=\dim\ker\varphi=2$.
However, $\deg(\mathcal A\otimes\mathcal B^{-1})=8-8=0$.
Therefore,
$h^0\bigl(\Sigma, \mathcal A\otimes\mathcal B^{-1}\bigr)\leq1$.
Another contradiction. The case $(4,2)$ is also impossible.

\medskip$\bullet$
{\it Step 5. The case $(x,y)=(3,3)$ and $\mathcal A$ isomorphic to $\mathcal A^{\vee}(1)$ is not possible.}

\smallskip
Suppose
$ A\simeq\mathcal B$.
Since $\mathcal B=\mathcal A^{-1}\otimes \mathcal O_{\Sigma}(1)$, it follows that
$\mathcal A^{\otimes2}\simeq \mathcal O_{\Sigma}(1)$.
After identifying $V$ with $U$, the multiplication becomes
$U\otimes U\longrightarrow H^0(\Sigma,\mathcal A^{\otimes2})\simeq W$.
Due to the commutativity of the product, it factors through
$\operatorname{Sym}^2U\longrightarrow W$.
Since
\[
\dim\operatorname{Sym}^2U=\binom{4}{2}=6=\dim W
\]
and the map is surjective, we have an isomorphism
$\operatorname{Sym}^2U\xrightarrow{\sim}W$.
The morphism defined by $\mathcal A$ is
$\phi_\mathcal A:\Sigma\longrightarrow\mathbb P (U^\vee)\simeq\mathbb P^2$.
If $s_0,s_1,s_2$ is a basis of $U$, the six sections
$s_0^2, s_0s_1, s_0s_2, s_1^2, s_1s_2, s_2^2$
form a basis of $W$. Therefore, the hyperplane morphism factors as
\[
\Sigma\xrightarrow{\phi_\mathcal A}\mathbb P^2
\xrightarrow{\nu_2}\mathbb P^5,
\]
where $\nu_2$ is the Veronese embedding. It follows that
$\Sigma\subseteq\nu_2(\mathbb P^2)$.
The Veronese surface is contained in six linearly independent quadrics.
Thus, $h^0\bigl(\mathbb P^5,\mathcal I_\Sigma(2)\bigr)\geq6$.
This is impossible, because  $\Sigma$ is a complete intersection of four quadrics.
The symmetric $(3,3)$ case is impossible.

\medskip$\bullet$
{\it Step 6. The case $(x,y)=(3,3)$ with $\mathcal A$ not isomorphic to $\mathcal A^{\vee}(1)$ is impossible.}

\smallskip
Suppose now that
$\mathcal A\not\simeq\mathcal B$.
The line bundle $\mathcal B\otimes\mathcal A^{-1}$ has degree zero and is non-trivial. Therefore,
$H^0\bigl(\Sigma,\mathcal B\otimes\mathcal A^{-1}\bigr)=0$.
Consider the Grassmannian of pencils of $U$:
\[
\mathbb G:=\mathbb G(2,U)\simeq\mathbb P(U^\vee)\simeq\mathbb P^2,
\]
and the tautological bundle $\mathcal S\subset U\otimes\mathcal O_\mathbb G$ of rank $2$. Multiplication induces
$$
\Psi:\mathcal S\otimes V\longrightarrow W\otimes\mathcal O_\mathbb G.
$$
Both bundles have rank $6$. Taking the determinant, we get a section
\[
\det\Psi\in H^0(\mathbb G,\mathcal O_\mathbb G(3)).
\]
Its divisor of zeros is therefore a plane cubic, provided the section is not identically zero.
The complete linear system $|\mathcal A|$ defines a map
\[
\phi_\mathcal A:\Sigma\longrightarrow\mathbb P(U^\vee)\simeq \mathbb G.
\]
For $p\in\Sigma$, the plane
$P_p:=\ker\bigl(U\longrightarrow\mathcal A(p)\bigr)$
is the pencil formed by the sections that vanish at $p$. Let
\[
D:=\phi_\mathcal A(\Sigma)\subset \mathbb G\simeq\mathbb P^2.
\]
Then
\[
P\in D\quad\Longleftrightarrow\quad P\text{ has a base point on }\Sigma.
\]
Since $\Sigma$ is integral, $D$ is also an integral curve.

If $P\notin D$, the pencil $P\subset U$ is base-point free. The pencil trick yields
the sequence
\[
0\longrightarrow\mathcal A^{-1}
\longrightarrow P\otimes\mathcal O_\Sigma
\longrightarrow\mathcal A\longrightarrow0.
\]
Tensoring by $\mathcal B$ gives
\[
0\longrightarrow\mathcal B\otimes\mathcal A^{-1}
\longrightarrow P\otimes\mathcal B
\longrightarrow\mathcal A\otimes\mathcal B\simeq \mathcal O_{\Sigma}(1)
\longrightarrow0.
\]
Thus
$
\ker\bigl(P\otimes V\longrightarrow W\bigr)
\simeq H^0\bigl(\Sigma,\mathcal B\otimes\mathcal A^{-1}\bigr)=0$.
Since the domain and codomain both have dimension $6$, the map
$
\Psi_P:P\otimes V\longrightarrow W
$
is an isomorphism. Therefore,
$
\det\Psi(P)\neq0$.
In particular, $\det\Psi$ is not the zero section.
Let  $\operatorname{div}(\det\Psi)$ be
the  divisor of zeros of $\det\Psi$. We have:
$$\operatorname{div}(\det\Psi)\in \left|\mathcal O_{\mathbb G}\left(3\right)\right|.$$

If $P\in D$, there exists $p\in\Sigma$ such that every $u\in P$ vanishes at $p$. For every $v\in V$, the product $uv$ also vanishes at $p$. Thus
$
{\text{Im}}\Psi_P\subseteq H^0\bigl(\Sigma, \mathcal O_{\Sigma}(1)\otimes\mathcal I_p\bigr)
$.
Since $\mathcal O_{\Sigma}(1)$ is globally generated and $\dim W=6$, we have
$
h^0\bigl(\Sigma, \mathcal O_{\Sigma}(1)\otimes\mathcal I_p\bigr)=5$.
Therefore
$
\det\Psi(P)=0
$.

\smallskip
We conclude that
$$\operatorname {Supp}\left(\operatorname{div}(\det\Psi)\right)=D.$$
Therefore, there exists an integer $a\geq1$ such that
$
\operatorname{div}(\det\Psi)=aD$.
As $\det\Psi$ is a section of $\mathcal O_{\mathbb P^2}(3)$,
$
3=a\deg D$.
The image $D$ cannot be a line. Thus
$
\deg D=3$, and
$a=1$.
Let $b$ be the degree of the map $\phi_\mathcal A:\Sigma\to D$. Since $\phi_\mathcal A$ is defined by the complete, base-point free linear system $|\mathcal A|$,
we have $\mathcal A\simeq\phi_\mathcal A^*\mathcal O_D(1)$.
Taking degrees, we get
$\deg\mathcal A=b\deg D$.
But $\deg\mathcal A=8$ and $\deg D=3$. This is absurd.

\smallskip
This concludes the proof of Theorem \ref{2,2,2,2}.
\end{proof}


\end{document}